\theoremstyle{plain}
\newtheorem{theorem}{Theorem}[section]
\newtheorem{corollary}[theorem]{Corollary}
\newtheorem{definition}[theorem]{Definition}
\newtheorem{lemma}[theorem]{Lemma}
\newtheorem{proposition}[theorem]{Proposition}
\theoremstyle{definition}
\newtheorem{remark}[theorem]{Remark}
\numberwithin{equation}{section}
\newcommand{\diff}{\mathop{}\!\mathrm{d}}
\DeclareMathOperator{\Hess}{Hess}
\DeclareMathOperator{\tr}{tr}
\DeclareMathOperator{\supp}{supp}
\title{Finite-time blowup for the inviscid vortex stretching equation}
\author[1]{Evan Miller}
\affil[1]{University of British Columbia, Department of Mathematics

emiller@msri.org}
\begin{document}

\maketitle

\begin{abstract}
In this paper, we will introduce the inviscid vortex stretching equation, which is a model equation for the 3D Euler equation where the advection of vorticity is neglected. We will show that there are smooth solutions of this equation which blowup in finite-time, even when restricting to axisymmetric, swirl-free solutions. This provides further evidence of the role of advection in depleting nonlinear vortex stretching for solutions of the 3D Euler equation.
\end{abstract}

\section{Introduction}

The Euler equation is one of the fundamental equations of fluid dynamics.
The incompressible Euler equation describes the flow of a fluid at constant density and with no viscosity; it can be expressed by
\begin{align} \label{Euler}
    \partial_t u 
    +(u\cdot\nabla)u
    +\nabla p
    &=0 \\
    \nabla\cdot u
    &=0,
\end{align}
where $u$ is the velocity, and the pressure, $p$, is determined completely by $u$.
Another important vector field for three dimensional flow is the vorticity, $\omega=\nabla \times u$. The evolution equation for the vorticity is given by
\begin{align} \label{EulerVort}
    \partial_t\omega
    +(u\cdot\nabla)\omega
    -(\omega\cdot \nabla)u
    &= 0 \\
    u =
    \nabla\times(-\Delta)^{-1}&\omega.
\end{align}
These formulations are equivalent: it is a straightforward computation to show that $u$ is a solution of \eqref{Euler} if and only if $\omega$ is a solution of $\eqref{EulerVort}$.
The term $(u\cdot\nabla)\omega$ gives rise to advection, while the term $(\omega\cdot\nabla)u$ gives rise to vortex stretching.

We will consider a model equation for the three dimensional Euler equation that focuses on the effect of vortex stretching by neglecting the advection term. This yields the following evolution equation, which we will call the inviscid vortex stretching equation:
\begin{align}
    \partial_t \omega &=
    P_{df}\left((\omega\cdot\nabla)u\right) \\
    u&=\nabla\times
    (-\Delta)^{-1}\omega.
\end{align}
Note that we must take the Helmholtz projection onto divergence free vector fields $P_{df}$ in order to preserve the divergence free structure of the equation. 
This equation can also be expressed in terms of the symmetric part of the velocity gradient 
$S_{ij}=\frac{1}{2}
    (\partial_i u_j+ \partial_j u_i)$, as
\begin{equation}
    \partial_t\omega=P_{df}(S\omega).
\end{equation}
For sufficiently smooth initial data, this equation has strong solutions locally in time.

\begin{theorem} \label{WellPosedIntro}
For all $\omega^0\in H^s_{df}\left(\mathbb{R}^3\right), s>\frac{3}{2}$,
there exists a unique strong solution of the inviscid vortex stretching equation
$\omega\in C^1\left(\left[0,T_{max}\right);
H^s_{df}\left(\mathbb{R}^3\right)\right)$, where \begin{equation}
T_{max}
\geq 
\frac{1}{2C_s\left\|\omega^0\right\|_{H^s}}.
\end{equation}
Furthermore, if $T_{max}<+\infty,$ then for all $0\leq t<T_{max}$,
\begin{equation}
    \|\omega(\cdot,t)\|_{H^s}
    \geq
    \frac{1}{2C_s (T_{max}-t)}.
\end{equation}
\end{theorem}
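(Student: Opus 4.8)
The plan is to treat the inviscid vortex stretching equation as an abstract ordinary differential equation $\frac{d}{dt}\omega=F(\omega)$ posed in the Banach space $X=H^s_{df}(\mathbb{R}^3)$, where $F(\omega)=P_{df}\bigl(S[\omega]\,\omega\bigr)$ with $S[\omega]=\frac12\bigl(\nabla u+(\nabla u)^{\mathsf{T}}\bigr)$ and $u=\nabla\times(-\Delta)^{-1}\omega$. The structural feature that makes this viable — and which is exactly what distinguishes the model from the full Euler vorticity equation — is that the nonlinearity loses no derivatives: the Biot--Savart map $\omega\mapsto\nabla u$ is a zeroth-order Fourier multiplier (a matrix of Riesz transforms), hence bounded $H^s\to H^s$, so $S[\omega]$ lives at the same regularity as $\omega$. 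I would first record this, together with the boundedness of $P_{df}$ on $H^s$, so that the only genuine analytic input is a product estimate.

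To establish local existence and uniqueness, I would use that for $s>\frac32$ the space $H^s$ is a Banach algebra; more precisely the Kato--Ponce / Moser product estimate together with the embedding $H^s\hookrightarrow L^\infty$ gives
$$\bigl\|S[\omega]\,\omega\bigr\|_{H^s}\ \lesssim\ \bigl\|S[\omega]\bigr\|_{H^s}\|\omega\|_{L^\infty}+\bigl\|S[\omega]\bigr\|_{L^\infty}\|\omega\|_{H^s}\ \lesssim\ \|\omega\|_{H^s}^2.$$
Since $F$ is bilinear in $\omega$, the same estimate applied after the splitting $S[\omega_1]\omega_1-S[\omega_2]\omega_2=S[\omega_1-\omega_2]\omega_1+S[\omega_2](\omega_1-\omega_2)$ shows that $F\colon X\to X$ is locally Lipschitz. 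The Cauchy--Lipschitz (Picard--Lindel\"of) theorem for ODEs in Banach spaces then yields a unique maximal solution $\omega\in C^1\bigl([0,T_{max});X\bigr)$, with uniqueness and the $C^1$ regularity both following from the abstract framework. Because the range of $P_{df}$ consists of divergence-free fields, $\partial_t\omega$ is divergence free, so the constraint $\nabla\cdot\omega=0$ is propagated and the solution genuinely remains in $H^s_{df}$.

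For the quantitative lower bound on $T_{max}$, I would differentiate the $H^s$ norm along the flow. Using that $P_{df}$ is a self-adjoint projection commuting with $\Lambda^s=(I-\Delta)^{s/2}$ and that $P_{df}\omega=\omega$, followed by Cauchy--Schwarz and the product estimate above,
$$\tfrac12\frac{d}{dt}\|\omega\|_{H^s}^2=\bigl\langle P_{df}(S[\omega]\omega),\omega\bigr\rangle_{H^s}=\bigl\langle S[\omega]\omega,\omega\bigr\rangle_{H^s}\le\bigl\|S[\omega]\omega\bigr\|_{H^s}\|\omega\|_{H^s}\le C_s\|\omega\|_{H^s}^3,$$
so that $y(t):=\|\omega(t)\|_{H^s}$ satisfies the Riccati inequality $y'\le C_sy^2$. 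Comparison with the solution of $y'=C_sy^2$, $y(0)=\|\omega^0\|_{H^s}=:M$, gives $y(t)\le M/(1-C_sMt)$, which in particular stays below $2M$ on $[0,\tfrac{1}{2C_sM}]$. Since the solution can be continued as long as its $H^s$ norm remains finite, this forces $T_{max}\ge\tfrac{1}{2C_sM}$.

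Finally, for the blowup rate I would exploit that the existence-time lower bound is invariant under translation in time: for any $t\in[0,T_{max})$, restart the equation from the datum $\omega(\cdot,t)$, which by local existence produces a solution on an interval of length at least $\tfrac{1}{2C_s\|\omega(\cdot,t)\|_{H^s}}$, and which by uniqueness coincides with $\omega$. Hence $T_{max}-t\ge\tfrac{1}{2C_s\|\omega(\cdot,t)\|_{H^s}}$, and rearranging yields the stated bound $\|\omega(\cdot,t)\|_{H^s}\ge\tfrac{1}{2C_s(T_{max}-t)}$. I expect the only real obstacle to be the first step: one must verify carefully that the vortex-stretching nonlinearity loses no derivatives, i.e.\ that $\omega\mapsto S[\omega]$ is bounded on $H^s$ rather than only on $H^{s+1}$. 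Once this structural point is secured, everything reduces to the algebra property of $H^s$ and standard Banach-space ODE theory, with no need for the commutator cancellations required in the Euler setting.
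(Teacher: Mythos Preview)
Your proposal is correct. The structural observation (that $\omega\mapsto\nabla u$ is a zeroth-order Fourier multiplier, so the nonlinearity maps $H^s_{df}\to H^s_{df}$), the algebra/product estimate, and the time-translation argument for the blowup rate all match the paper's proof. The one substantive difference is how you obtain the quantitative bound $T_{max}\ge\frac{1}{2C_s\|\omega^0\|_{H^s}}$: you derive a Riccati inequality $y'\le C_s y^2$ for $y=\|\omega\|_{H^s}$ and integrate it directly, whereas the paper first proves local existence on $[0,T]$ for any $T<\frac{1}{4C_s\|\omega^0\|_{H^s}}$ with the a priori bound $\|\omega\|_{C_TH^s_x}<2\|\omega^0\|_{H^s}$, and then iterates this step, summing the resulting geometric series $\sum_k 2^{-k}$ to reach $\frac{1}{2C_s\|\omega^0\|_{H^s}}$. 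Your energy-inequality route is shorter and in fact yields the slightly stronger bound $T_{max}\ge\frac{1}{C_s\|\omega^0\|_{H^s}}$ (since the Riccati solution stays finite on the whole interval $[0,\frac{1}{C_sM})$), though you only claim the weaker inequality that matches the theorem. The paper's iteration, on the other hand, is self-contained in that it never needs to justify differentiating the $H^s$ norm along the flow.
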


There are smooth solutions of the inviscid vortex stretching equation that blowup in finite-time.
This is true even in geometric setups, such as axisymmetric, swirl-free flows, where the three dimensional Euler equation has global-in-time smooth solutions.

\begin{theorem} \label{BlowupThmIntro}
Suppose $\omega^0\in H^s_*\left(\mathbb{R}^3\right), s>\frac{3}{2}$ is the vorticity for an axisymmetric, swirl free vector field, is not identically zero, and further that $\omega^0_{\theta}(r,z)$ is odd in $z$, compactly supported away from the axis $r=0$ and the plane $z=0$, 
and for all $r,z\geq 0$,
\begin{equation}
    \omega^0_\theta(r,z)\leq 0.
\end{equation}
Then the solution of the inviscid vortex stretching equation $\omega \in
C^1\left([0,T_{max});H^s_*
\left(\mathbb{R}^3\right)\right)$ with this initial data preserves these properties 
and blows up in finite-time
\begin{equation}
    T_{max}\leq 
    \frac{1}{\kappa Q^0},
\end{equation}
where the constant $\kappa>0$ depends on the geometry of $\Omega^0$, the support of the vorticity, and
\begin{equation}
    Q^0
    =
    -\int_0^\infty\int_0^\infty
    r^2\omega^0_{\theta}(r,z)
    \diff r \diff z.
\end{equation}
\end{theorem}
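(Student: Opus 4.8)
The plan is to exploit the special structure of axisymmetric, swirl-free flows to collapse the equation to a pointwise-in-space ODE, and then to track the weighted moment $Q$ and show it obeys a Riccati-type differential inequality that forces finite-time blowup. First I would write $\omega=\omega_\theta(r,z)e_\theta$ and compute the stretching term in cylindrical coordinates, obtaining $(\omega\cdot\nabla)u=\frac{u_r}{r}\omega_\theta\,e_\theta$. The crucial observation is that any axisymmetric, purely azimuthal field $f(r,z)e_\theta$ is automatically divergence free (its divergence is $\frac{1}{r}\partial_\theta f=0$), so the Helmholtz projection acts as the identity on this class. Hence the inviscid vortex stretching equation reduces to the pointwise scalar ODE
\begin{equation}
\partial_t\omega_\theta=\frac{u_r}{r}\,\omega_\theta ,
\end{equation}
where $u_r$ is recovered from $\omega_\theta$ through the axisymmetric Biot--Savart law. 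At each fixed $(r,z)$ this integrates to
\begin{equation}
\omega_\theta(r,z,t)=\omega_\theta^0(r,z)\exp\!\left(\int_0^t \frac{u_r(r,z,s)}{r}\,\diff s\right).
\end{equation}

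From this representation I would read off that the structural hypotheses propagate. Because there is no advection, the support of $\omega_\theta(\cdot,t)$ equals $\supp\omega_\theta^0$ for every $t<T_{max}$, so it remains in the fixed compact region $\Omega^0$ bounded away from the axis $r=0$ and the plane $z=0$; the sign condition $\omega_\theta\leq 0$ on $\{r,z\geq 0\}$ is preserved since the exponential factor is positive; and, using that Biot--Savart sends a $z$-odd $\omega_\theta$ to a $z$-even $u_r$, the oddness of $\omega_\theta$ in $z$ is preserved as well. In particular $Q(t)\geq 0$ for all time, and $Q(t)>0$ since $\omega^0\not\equiv 0$.

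Next I would differentiate $Q$. Using the ODE,
\begin{equation}
\frac{\diff Q}{\diff t}=-\int_0^\infty\!\!\int_0^\infty r^2\,\frac{u_r}{r}\,\omega_\theta\,\diff r\,\diff z=-\int_0^\infty\!\!\int_0^\infty r\,u_r\,\omega_\theta\,\diff r\,\diff z .
\end{equation}
The heart of the matter is to bound this below by $\kappa Q^2$. Inserting the axisymmetric Biot--Savart kernel for $u_r$ and folding the $z'$-integral using the oddness of $\omega_\theta$ rewrites the right-hand side as a quadratic form in $(-\omega_\theta)\geq 0$ with an explicit kernel. The odd symmetry acts like a rigid wall at $z=0$ (it forces $u_z(r,0)=0$), and for this mirror configuration the induced radial velocity is outward, $u_r>0$ on the support in $\{r,z>0\}$; this is precisely the mechanism by which the stretching factor $u_r/r$ amplifies $|\omega_\theta|$. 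Quantitatively, since $\supp\omega_\theta$ is confined to the fixed compact set $\Omega^0$ bounded away from $r=0$ and $z=0$, the reflection-folded kernel has a definite favorable sign and is bounded away from zero there, yielding a pointwise lower bound $\frac{u_r(r,z)}{r}\geq \kappa Q$ on the support. Combining this with $\omega_\theta\leq 0$ gives
\begin{equation}
\frac{\diff Q}{\diff t}=\int_0^\infty\!\!\int_0^\infty r^2\,\frac{u_r}{r}\,(-\omega_\theta)\,\diff r\,\diff z\geq \kappa Q\int_0^\infty\!\!\int_0^\infty r^2\,(-\omega_\theta)\,\diff r\,\diff z=\kappa Q^2 .
\end{equation}

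I expect the verification of the sign of $u_r$ together with the uniform lower bound on the kernel to be the main obstacle: Biot--Savart is nonlocal, the axisymmetric kernel is expressed through elliptic integrals with a diagonal singularity, and one must confirm both that its sign is favorable after the $z$-reflection folding and that the compactness of $\Omega^0$ (away from the axis and the symmetry plane) converts this into a clean constant $\kappa$ depending only on the support geometry. Granting $\frac{\diff Q}{\diff t}\geq\kappa Q^2$, comparison with the Riccati ODE gives $Q(t)\geq Q^0/(1-\kappa Q^0 t)$, so $Q$ must escape to $+\infty$ no later than $t=\frac{1}{\kappa Q^0}$. Since $Q(t)\lesssim\|\omega(\cdot,t)\|_{H^s}$ by the fixed compact support and Sobolev embedding, this forces $\|\omega(\cdot,t)\|_{H^s}\to\infty$, whence by the blowup criterion of Theorem \ref{WellPosedIntro} the maximal existence time satisfies $T_{max}\leq\frac{1}{\kappa Q^0}$.
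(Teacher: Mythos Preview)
Your overall architecture matches the paper's: reduce to the scalar ODE $\partial_t\omega_\theta=\frac{u_r}{r}\omega_\theta$, propagate the structural hypotheses via the exponential formula, differentiate the moment $Q$, and close a Riccati inequality. The gap is in the step where you pass from $\frac{\diff Q}{\diff t}=-\int_0^\infty\int_0^\infty r\,u_r\,\omega_\theta\,\diff r\,\diff z$ to $\frac{\diff Q}{\diff t}\geq\kappa Q^2$.

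Your claim that the reflection-folded kernel has a definite sign, and hence that $u_r>0$ (indeed $\frac{u_r}{r}\geq\kappa Q$) pointwise on the support, is not correct. After folding in $\bar z$ using oddness, the Biot--Savart representation of $u_r$ contains four terms: two involving $(z+\bar z)$ coming from the mirror images, and two involving $(z-\bar z)$ coming from the sources in the upper half space. Only the $(z+\bar z)$ pair has a definite sign; the $(z-\bar z)$ pair changes sign across the diagonal $z=\bar z$. Physically this is the leapfrogging interaction: if most of the negative vorticity sits at heights below a given point $(r,z)\in\Omega_+^0$, those sources push $u_r(r,z)$ inward, and if they are much closer to $(r,z)$ than the mirror images are, the net $u_r(r,z)$ can be negative. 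So no pointwise lower bound of the form $\frac{u_r}{r}\geq\kappa Q$ is available.

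What rescues the argument---and what the paper does---is to work at the level of the bilinear form rather than pointwise. Writing $\frac{\diff Q}{\diff t}$ as a double integral against $\omega_\theta(r,z)\omega_\theta(\bar r,\bar z)\geq 0$, the $(z-\bar z)$ terms are \emph{antisymmetric} under the interchange $(r,z)\leftrightarrow(\bar r,\bar z)$ while the measure is symmetric, so they integrate to zero. After this symmetrization only the $(z+\bar z)$ kernel survives, and that kernel is strictly positive and, by compactness of $\Omega_+^0$ away from $\{r=0\}\cup\{z=0\}$, bounded below by a constant $\kappa>0$ times $r\bar r$. This yields $\frac{\diff Q}{\diff t}\geq\kappa Q^2$ directly, without ever asserting a sign on $u_r$. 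Once you insert this symmetrization step, the rest of your argument goes through unchanged.
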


\subsection{Discussion of results}
The local wellposedness result in Theorem \ref{WellPosedIntro} is the same regularity class required for smooth solutions of the 3D Euler equation.
For initial data $u^0\in H^s_{df}\left(\mathbb{R}^3\right), s>\frac{5}{2}$, Kato proved the local-in-time well-posedness of solutions of the Euler equation \cite{KatoLWP}.
We will note this is equivalent to local-well-posedness for initial data $\omega^0 \in \dot{H}^{-1} \cap H^s, s>\frac{3}{2}$. We do not require that $u^0\in L^2$---or equivalently that $\omega^0\in \dot{H}^{-1}$---because there is no energy equality for the inviscid vortex stretching equation. One of the fundamental properties of the Euler equation is that for all $0<t<T_{max}$,
\begin{equation} \label{EnergyEquality}
    \|u(\cdot,t)\|_{L^2}^2=
    \left\|u^0\right\|_{L^2}^2.
\end{equation}
The change in the structure of the equation that comes from dropping the advection term means that the the energy equality \eqref{EnergyEquality} does not hold for the inviscid vortex stretching equation.

It is also unclear whether the Beale-Kato-Majda criterion will hold for the inviscid vortex stretching equation. The Beale-Kato-Majda criterion \cite{BealeKatoMajda} states that if a smooth solution of the Euler equation blows up in finite-time, then
\begin{equation}
    \int_0^{T_{max}}
    \|\omega(\cdot,t)\|_{L^\infty} 
    \diff t
    =+\infty.
\end{equation}

The finite-time blowup result for the inviscid vortex stretching equation, Theorem \ref{BlowupThmIntro}, shows conclusively that the only factor that can prevent blowup by nonlinear vortex stretching is depletion of nonlinearity by the advection. This has already been observed in some one dimensional models, as discussed in detail in \cite{ElgindiJeong}.
Constantin, Lax, and Majda introduced a one dimensional model for the vorticity equation and showed that this model equation blows up in finite-time \cite{ConstantinLaxMajda}.
The Constantin-Lax-Majda model is given by
\begin{equation}
    \partial_t \omega =\omega H[\omega],
\end{equation}
where $H$ is the Hilbert transform.
This model equation was modified by De Gregorio \cite{DeGregorio} to include the impact of advection, giving the evolution equation,
\begin{align}
    \partial_t\omega
    +u\partial_x\omega 
    -\omega \partial_x u
    &= 0 \\
    \partial_x u&=H[\omega].
\end{align}

This is a much more realistic one dimensional model, as both vortex stretching and advection play a role.
The De Gregorio model equation is also much less singular than the Constantin-Lax-Majda model equation, due to a depletion of nonlinearity by advection. For example, with initial data $\omega^0=\sin$, the Constantin-Lax-Majda model blows up in finite-time. For the De Gregorio model $\omega=\sin$ is a steady state, and in fact is stable for small perturbations \cite{JiaStewartSverak}.
Global regularity for smooth solutions of the De Gregorio model equation has been conjectured, although this has turned out not to be the case. Recently, Elgindi and Jeong have proven blowup for solutions of the De Gregorio equation that are only $C^\alpha$, and not smooth \cite{ElgindiJeong}. Chen, Hou, and Huang recently improved this result, proving finite-time blowup for the De Gregorio equation with smooth initial data \cite{ChenHouHuang}. This phenomenon has also been analyzed by Sarria and Saxton in the context of the inviscid Proudman-Johnson equation, which can be obtained from the Euler equation under a stagnation point Ansatz \cites{SarriaSaxton,SarriaSaxton2}.

This is very similar to the picture for axisymmetric, swirl-free solutions of the Euler equation in three dimensions.
Ukhovskii and Yudovich proved global regularity for axisymmetric, swirl-free solutions of the three dimensional Euler equation, assuming smooth enough initial data \cite{Yudovich}.
Danchin extended this global regularity result by weakening the conditions on the initial data to be broad enough to guarantee global regularity whenever the initial data is axisymmetric, swirl-free, and in $H^s\left(\mathbb{R}^3\right), s>\frac{5}{2}$, the standard regularity class for strong solutions of the Euler equation \cites{Danchin1,Danchin2}.
Very recently, Elgindi proved finite-time blowup for axisymmetric swirl-free solutions of the Euler equation with vorticity that is only $C^\alpha$, not smooth \cite{ElgindiBlowup}. Interestingly, the geometric setup for this blowup---an axisymmetric, swirl-free velocity, with $\omega_\theta(r,z)$ odd in $z$ and negative on the upper half plane---is precisely the same as in our blowup result Theorem \ref{BlowupThmIntro}. The primary difference is that, in Elgindi's result, blowup occurs at the axis of symmetry, whereas in Theorem \ref{BlowupThmIntro}, the vorticity is supported away from the axis of symmetry.

In addition to the various one dimensional models discussed above, there has also been work on three dimensional models for the vorticity equation. The regularizing role of advection was noted in the classic work of Constantin on the distorted Euler equation, which also neglect the advection of vorticity \cite{Constantin}. It should be noted that the model considered by Constantin has some fundamental differences with the model considered here. In particular, the Constantin's distorted Euler equations do not preserve the divergence free condition as the model under consideration in this paper does.
Constantin proves blowup for solutions of the distorted Euler equation having the form
\begin{equation}
    \omega(x)=f(|x|)\frac{x}{|x|},
\end{equation}
and this property---so long as $f$ is regular enough and vanishes at the origin---guarantees that $\omega$ will be a gradient. In particular, if $g'=f$, then
\begin{equation}
    \omega(x)=\nabla g(|x|)
\end{equation}

\begin{remark}
A little more should be said about the relationship of the model equation considered in this paper to the one introduced by Constantin in \cite{Constantin}.
Taking $U=\nabla u$, the distorted Euler equation is given by
\begin{equation}
    \partial_t U+ U^2 + \Hess(-\Delta)^{-1}\tr(U^2)
    =0.
\end{equation}
Decomposing the velocity gradient into symmetric and anti-symmetric parts, $U=S+A$, and using the fact that the anti-symmetric matrix can be represented using the vorticity vector as
\begin{equation}
    A=\frac{1}{2}\left(
    \begin{array}{ccc}
        0 & \omega_3 & -\omega_2  \\
       -\omega_3  & 0 & \omega_1 \\
       \omega_2 & -\omega_1 & 0
    \end{array}
    \right),
\end{equation}
this can be expressed as a coupled system of equations
\begin{align}
    \partial_t S+S^2+\frac{1}{4}\omega\otimes\omega
    -\frac{1}{4}|\omega|^2 I_3
    +\Hess(-\Delta)^{-1}\tr(U^2) &=0 \\
    \partial_t \omega -S\omega &=0.
\end{align}

The first clear difference between the model considered in this paper and the distorted Euler equation is that the distorted Euler equation does not have a Helmholtz projection applied to the vortex stretching term, but the differences go deeper. The distorted Euler equation---because it does not preserve the divergence free constraint---does not have a Biot-Savart law. In our equation, $S$ is determined completely by the vorticity with
\begin{equation}
    S=\nabla_{sym}\nabla\times (-\Delta)^{-1}\omega.
\end{equation}
In the distorted Euler equation, $S$ and $\omega$ cannot be determined in terms of each other, and both evolve via coupled evolution equations.
There are major similarities at a philosophical level between the blowup results considered in this paper and the blowup results for the distorted Euler equation in \cite{Constantin}: both show that advection tends to deplete the nonlinear vortex stretching. The equations are, however, substantially different at a technical level.
\end{remark}

In a series of papers, Hou and a number of co-authors consider the axisymmetric Euler equations with swirl.
These equations can be formulated as
\begin{align}
    (\partial_t+u_r\partial_r
    +u_z\partial_z)\frac{u_\theta}{r}
    &=
    -2u_r \frac{u_\theta}{r} \\
    (\partial_t+u_r\partial_r
    +u_z\partial_z)
    \frac{\omega_\theta}{r}
    &=
    \partial_z\left(\frac{u_\theta}{r}\right)^2,
\end{align}
where $u_r$ and $u_z$ can be determined from $\frac{\omega_\theta}{r}$ by the Biot-Savart law.
Hou and Lei \cite{HouLei} proposed a model equation where the advection terms $u_r\partial_r+u_z\partial_z$ are neglected, both in the context of the Euler and Navier--Stokes equations. Hou et al. proved finite-time blowup for these axisymmetric equations with vorticity neglected \cite{HouEtAl}.
See also \cites{HouLi,Hou2018,HouReg} for further work on this scenario.
In particular, Hou, Liu, and Wang proved global regularity for the viscous case of the these equations when the advection term is strengthened \cite{HouReg}.

We should note that the present work involves some fundamental differences with the approach of Hou and various co-authors. 
In particular, when the axisymmetric Euler equations are expressed in terms of $\frac{\omega_\theta}{r}$, a portion of the vortex stretching is absorbed into advection. Note in particular that
\begin{equation}
    u_r\partial_r \frac{\omega_\theta}{r}
    =
    \frac{1}{r}\left(
    u_r\partial_r \omega_\theta
    -\frac{u_r}{r}\omega_\theta\right).
\end{equation}
The latter of these two terms comes from vortex stretching $(\omega\cdot\nabla) u.$ 
While the work in this paper is in the same spirit as \cite{HouEtAl}, it is not a special case of the results in that paper. Hou et al. neglect the advection of $\frac{\omega_\theta}{r}$, while we neglect the advection of $\omega_\theta$.
It should also be noted that the inability to absorb vortex stretching into advection for vorticities that are only $C^\alpha$ was the key mechanism for Elgindi's $C^{1,\alpha}$ blowup solutions of the axisymmetric, swirl-free Euler equations.

\begin{remark}
The finite-time blowup for strong solutions of the inviscid vortex stretching equation is consistent with the large body of previous work involving both one and three dimensional models of the Euler equation in vorticity form. The key advance here is that the inviscid vortex stretching equation is an evolution equation on the relevant constraint space: the space of divergence free vector fields on $\mathbb{R}^3$. Because this is the appropriate constraint space for the three dimensional Euler equation, the geometric structure of these blowup solutions has a direct bearing on solutions of the actual Euler equation in a way that one dimensional models simply cannot. While our blowup Ansatz is axisymmetric and swirl-free, this model equation for inviscid fluids is also flexible enough to consider the general form, and is not specific to axisymmetric, swirl-free flows. There is every reason to expect that blowup also happens in more complicated geometries, but it would be harder to find monotone quantites in these settings.
\end{remark}

In fact, the blowup result is very directly related, even down to the functional used to prove blowup, to a very recent result of Choi and Jeong \cite{ChoiJeong}, which establishes that axisymmetric, swirl-free solutions of the Euler equation satisfying the geometric constraints of Theorem \ref{BlowupThmIntro} exhibit vorticity growth at infinity of the form
\begin{equation}
    \|\omega(\cdot,t)\|_{L^\infty}
    \geq C(1+t)^{\frac{1}{15}-\epsilon}.
\end{equation}
We will discuss this further in section \ref{SectionBlowupAS}. In particular, we will prove Theorem \ref{AnisoThm}, which requires that for solutions of the full Euler equation with initial data satisfying the geometric conditions in Theorem \ref{BlowupThmIntro}, the support of the vorticity must become sufficiently anisotropic to avoid finite-time blowup.

In addition to being the same geometric setup considered by Elgindi \cite{ElgindiBlowup} and Choi and Jeong \cite{ChoiJeong}, the geometric setup for blowup solutions of the inviscid vortex stretching equation in Theorem \ref{BlowupThmIntro} is related to the geometry of colliding vortex rings.
It is not necessary in Theorem \ref{BlowupThmIntro} that the vorticity be highly concentrated,
but when it is highly concentrated, this is precisely the setup of colliding vortex rings. 
In the case of the inviscid vortex stretching equation, due to the lack of advection, the vortex rings do not collide, but simply magnify each other.
Colliding vortex rings have a long history in the fluid mechanics literature both in terms of our understanding of vortex stretching and the onset of turbulence. In the 1970s, Oshima conducted the first major laboratory observation of colliding vortex rings \cite{Oshima}; they have remained a central example in the fluid mechanics literature ever since.

\subsection{Organization}
In section \ref{SectionDefinitions}, we will give definitions for a number of relevant spaces, state several key lemmas, and go over notation.
In section \ref{SectionLocalWP}, we will develop the local wellposedness theory for the inviscid vortex stretching equation and prove Theorem \ref{WellPosedIntro}.
Finally in section \ref{SectionBlowupAS}, we will outline the properties of axisymmetric, swirl-free solutions of the inviscid vortex stretching equation, and we will use these properties to prove Theorem \ref{BlowupThmIntro}, establishing finite-time blowup.

\section{Definitions, notation, and key lemmas} \label{SectionDefinitions}

We will begin by defining Hilbert spaces and recalling a key result.

\begin{definition}
For all $s\in \mathbb{R}$,
let $H^s\left(\mathbb{R}^3\right)$
be the Hilbert space with norm
\begin{equation}
    \|f\|_{H^s}
    =
    \left(\int_{\mathbb{R}^3}
    \left(1+4\pi^2|\xi|^2\right)^s
    \left|\hat{f}(\xi)\right|^2 \diff\xi
    \right)^\frac{1}{2}
\end{equation}
For all $s>-\frac{3}{2}$, we will define the homogeneous version of this space $\dot{H}^s\left(\mathbb{R}^3\right)$ by the norm
\begin{equation}
    \|f\|_{\dot{H}^s}
    =\left(\int_{\mathbb{R}^3}
    (2\pi|\xi|)^{2s} 
    \left|\hat{f}(\xi)\right|^2
    \diff\xi\right)^\frac{1}{2}.
\end{equation}
\end{definition}

\begin{lemma} \label{HilbertAlgebra}
    If $s>\frac{3}{2}$, then $H^s\left(\mathbb{R}^3\right)$ is an algebra.
    In particular, for all $f,g\in H^s\left(\mathbb{R}^3\right)$,
    \begin{equation}
        \|fg\|_{H^s} 
        \leq
        C_s \|f\|_{H^s} \|g\|_{H^s},
    \end{equation}
    where $C_s>0$ is an absolute constant depending only on $s>\frac{3}{2}$.
\end{lemma}

\begin{remark}
Throughout this paper, the constant $C_s$ will always refer to the constant in Lemma \ref{HilbertAlgebra}. This result is a classical analysis bound for Sobolev spaces. For more details, see Lemma 3.4 in Majda and Bertozzi \cite{MajdaBertozzi} and the proof in the appendix of chapter 3.
\end{remark}

\begin{definition}
For any Banach space $X\left(\mathbb{R}^3\right)$
of functions over $\mathbb{R}^3$, we will take the Banach space $C^1\left([0,T];
X\left(\mathbb{R}^3\right)\right)$,
to be the Banach space with norm
\begin{equation}
    \|f\|_{C^1\left([0,T];X\right)}
    =
    \sup_{0\leq t\leq T}\|f(\cdot,t)\|_X
    +\sup_{0\leq t\leq T}
    \|\partial_t f(\cdot,t)\|_X.
\end{equation}
\end{definition}

We will also define the space of gradients and divergence free vector fields, state the Helmholtz decomposition, as well as a related isometry for $\omega$ and $\nabla u$.

\begin{definition}
For all $s\in\mathbb{R}$, the space of divergence free vector fields $H^s_{df}\subset H^s$ is given by
\begin{equation}
    H^s_{df}\left(\mathbb{R}^3;\mathbb{R}^3\right)
    =
    \left\{v\in
    H^s\left(\mathbb{R}^3;\mathbb{R}^3\right)
    : \xi\cdot\hat{v}(\xi)=0\right\}
\end{equation}
\end{definition}

\begin{definition}
For all $s\in\mathbb{R}$, the space of gradients $H^s_{gr}\subset H^s$ is given by
\begin{equation}
    H^s_{gr}\left(\mathbb{R}^3;\mathbb{R}^3\right)
    =
    \left\{v\in
    H^s\left(\mathbb{R}^3;\mathbb{R}^3\right)
    : \xi\times\hat{v}(\xi)=0\right\}
\end{equation}
\end{definition}

\begin{lemma} \label{HelmholtzDecomp}
For all $s\in\mathbb{R}$ and for all
$u\in H^s_{df}\left(\mathbb{R}^3\right)$
and $\nabla f\in H^s_{gr}\left(\mathbb{R}^3\right)$,
\begin{equation}
    \left<u,\nabla f\right>_{H^s}=0.
\end{equation}
Furthermore, for all $s\in\mathbb{R}$ and for all $v\in H^s\left(\mathbb{R}^3;\mathbb{R}^3\right)$
there exists a unique $u\in H^s_{df}\left(\mathbb{R}^3\right)$
and $\nabla f\in H^s_{gr}\left(\mathbb{R}^3\right)$
such that
\begin{equation}
    v=u+\nabla f,
\end{equation}
and 
\begin{equation}
    \|v\|_{H^s}^2
    =
    \|u\|_{H^s}^2+\|\nabla f\|_{H^s}^2.
\end{equation}

We will define the Helmholtz projection onto divergence free vector fields,
$P_{df}: H^s \to H^s_{df}$, by
\begin{equation}
    P_{df}(v)=u,
\end{equation}
noting that this implies that for all $v\in 
H^s\left(\mathbb{R}^3;\mathbb{R}^3\right)$,
\begin{equation}
    \|P_{df}(v)\|_{H^s}\leq
    \|v\|_{H^s}.
\end{equation}

\end{lemma}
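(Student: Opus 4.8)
The plan is to prove everything on the Fourier side, where the Helmholtz decomposition reduces to the pointwise orthogonal splitting of the fiber $\mathbb{C}^3$ at each frequency $\xi\neq 0$ into the line spanned by the (real) vector $\xi$ and its Hermitian-orthogonal complement. Concretely, I would introduce the Leray projection matrix $P(\xi)=I_3-\frac{\xi\otimes\xi}{|\xi|^2}$, which for $\xi\neq 0$ is a real, symmetric, idempotent matrix of operator norm one, and define $\hat{u}(\xi)=P(\xi)\hat{v}(\xi)$ together with $\widehat{\nabla f}(\xi)=(I_3-P(\xi))\hat{v}(\xi)=\frac{\xi\cdot\hat{v}(\xi)}{|\xi|^2}\xi$. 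The singleton $\xi=0$ is a null set and is irrelevant for any of the $L^2$-type integrals that follow.

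For the orthogonality claim, suppose $u\in H^s_{df}$ and $\nabla f\in H^s_{gr}$. The condition $\xi\times\widehat{\nabla f}(\xi)=0$ forces $\widehat{\nabla f}(\xi)=\lambda(\xi)\xi$ for a scalar function $\lambda(\xi)$, while $\xi\cdot\hat{u}(\xi)=0$ by the divergence-free condition. Since $\xi$ is real, the pointwise Hermitian inner product on $\mathbb{C}^3$ gives $\hat{u}(\xi)\cdot\overline{\widehat{\nabla f}(\xi)}=\overline{\lambda(\xi)}\,(\xi\cdot\hat{u}(\xi))=0$ for a.e.\ $\xi$. Multiplying by the weight $(1+4\pi^2|\xi|^2)^s$ and integrating shows the integrand vanishes identically, so $\langle u,\nabla f\rangle_{H^s}=0$.

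For existence of the decomposition, with $u$ and $\nabla f$ defined as above I would verify the two defining Fourier conditions: $\xi\cdot\hat{u}(\xi)=\xi\cdot\hat{v}(\xi)-\frac{\xi\cdot\hat{v}(\xi)}{|\xi|^2}|\xi|^2=0$, so $u\in H^s_{df}$, and $\xi\times\widehat{\nabla f}(\xi)=\frac{\xi\cdot\hat{v}(\xi)}{|\xi|^2}(\xi\times\xi)=0$, so $\nabla f\in H^s_{gr}$; membership in $H^s$ follows since $|\hat{u}(\xi)|,|\widehat{\nabla f}(\xi)|\leq|\hat{v}(\xi)|$ pointwise because $P(\xi)$ and $I_3-P(\xi)$ are orthogonal projections. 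Because $\hat{u}(\xi)$ lies in $\{\xi\}^\perp$ and $\widehat{\nabla f}(\xi)$ lies in $\mathrm{span}\{\xi\}$, these two vectors are Hermitian-orthogonal in $\mathbb{C}^3$, so $|\hat{v}(\xi)|^2=|\hat{u}(\xi)|^2+|\widehat{\nabla f}(\xi)|^2$ pointwise; weighting by $(1+4\pi^2|\xi|^2)^s$ and integrating yields the norm identity $\|v\|_{H^s}^2=\|u\|_{H^s}^2+\|\nabla f\|_{H^s}^2$.

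Uniqueness follows from the orthogonality already established: if $v=u_1+\nabla f_1=u_2+\nabla f_2$ are two admissible decompositions, then $w:=u_1-u_2=\nabla f_2-\nabla f_1$ lies in both $H^s_{df}$ and $H^s_{gr}$, so $\hat{w}(\xi)$ is simultaneously orthogonal to and parallel to $\xi$ for a.e.\ $\xi$, forcing $\hat{w}=0$; equivalently $\|w\|_{H^s}^2=\langle w,w\rangle_{H^s}=0$ by orthogonality. The projection bound is then immediate from the norm identity, since $\|P_{df}(v)\|_{H^s}^2=\|u\|_{H^s}^2=\|v\|_{H^s}^2-\|\nabla f\|_{H^s}^2\leq\|v\|_{H^s}^2$. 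There is no serious obstacle here: the entire content is the elementary fact that multiplication by the real vector $\xi$ respects complex conjugation, so the real orthogonal decomposition $\mathbb{R}^3=\{\xi\}^\perp\oplus\mathrm{span}\{\xi\}$ complexifies to a Hermitian-orthogonal decomposition of each fiber. The only bookkeeping to keep honest is the behavior at $\xi=0$, which is a null set and affects no integral.
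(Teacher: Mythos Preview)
Your argument is correct and is the standard Fourier-side proof of the Helmholtz decomposition. The paper does not supply a proof of this lemma at all; it states the result and moves on, treating it as classical (in the same spirit as the remark following Proposition~\ref{VortIsometry}, where the analogous isometry is described as ``a two line computation on the Fourier space side''). Your write-up fills in exactly the details one would expect, so there is nothing to correct.
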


\begin{proposition} \label{VortIsometry}
    For all $s\in\mathbb{R}$, and for all
    $\mathbf{\omega}\in H^s_{df}\left(\mathbb{R}^3\right)$,
    \begin{equation}
        \|\mathbf{\omega}\|_{H^s}
        =
        \|\nabla u\|_{H^s},
    \end{equation}
    where $u=\nabla \times
    (-\Delta)^{-1}\mathbf{\omega}$
    and equivalently
    $\omega=\nabla\times u$.
\end{proposition}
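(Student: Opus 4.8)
The plan is to reduce everything to a pointwise identity on the Fourier side, since both $\|\omega\|_{H^s}$ and $\|\nabla u\|_{H^s}$ are defined through the Fourier transform. With the convention fixed by the norm in the definition above, differentiation acts as $\widehat{\partial_j f}(\xi) = 2\pi i \xi_j \hat{f}(\xi)$, so that $-\Delta$ has multiplier $4\pi^2|\xi|^2$ and the Biot--Savart law $u = \nabla\times(-\Delta)^{-1}\omega$ becomes
\begin{equation}
    \hat{u}(\xi) = \frac{i\,\xi\times\hat{\omega}(\xi)}{2\pi|\xi|^2}.
\end{equation}
First I would record this and note the (standard) consequence that $\omega = \nabla\times u$, which follows from the identity $\nabla\times(\nabla\times F) = \nabla(\nabla\cdot F) - \Delta F$ together with $\nabla\cdot\omega = 0$, confirming that the two descriptions of $u$ and $\omega$ in the statement are indeed equivalent.

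Next I would compute the Fourier symbol of the full velocity gradient. Viewing $\nabla u$ as the matrix with entries $\partial_k u_j$, its Hilbert--Schmidt (Frobenius) norm pointwise in $\xi$ is
\begin{equation}
    \bigl|\widehat{\nabla u}(\xi)\bigr|^2 = \sum_{j,k} \bigl|2\pi i\,\xi_k\,\hat{u}_j(\xi)\bigr|^2 = 4\pi^2|\xi|^2\,|\hat{u}(\xi)|^2.
\end{equation}
Substituting the expression for $\hat{u}$ gives $\bigl|\widehat{\nabla u}(\xi)\bigr|^2 = |\xi\times\hat{\omega}(\xi)|^2/|\xi|^2$. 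The one genuinely structural step is then to invoke the cross-product identity $|\xi\times\hat{\omega}|^2 = |\xi|^2|\hat{\omega}|^2 - |\xi\cdot\hat{\omega}|^2$ and the divergence-free hypothesis $\xi\cdot\hat{\omega}(\xi)=0$, which forces $|\xi\times\hat{\omega}|^2 = |\xi|^2|\hat{\omega}|^2$ and hence the exact cancellation $\bigl|\widehat{\nabla u}(\xi)\bigr|^2 = |\hat{\omega}(\xi)|^2$ for almost every $\xi$.

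Finally I would integrate this pointwise identity against the weight $(1+4\pi^2|\xi|^2)^s$ to conclude $\|\nabla u\|_{H^s}^2 = \|\omega\|_{H^s}^2$. I do not anticipate a serious obstacle: the result is essentially a bookkeeping computation once the conventions are pinned down. The only place demanding care is the second step, where one must use the correct (Frobenius) matrix norm on $\nabla u$ and recognize that the isometry depends crucially on the divergence-free constraint; without $\xi\cdot\hat{\omega}=0$ one would obtain only the inequality $\|\nabla u\|_{H^s}\leq\|\omega\|_{H^s}$, and the hypothesis $\omega\in H^s_{df}$ is precisely what promotes this to an equality.
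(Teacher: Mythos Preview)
Your proposal is correct and follows exactly the approach the paper indicates: the paper does not write out a proof but merely remarks that the result ``is a two line computation on the Fourier space side,'' and your argument is precisely that computation, carried out carefully with the stated conventions. There is nothing to add.
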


\begin{remark}
This is a classical result in the mathematical fluid mechanics literature, and in some forms goes back to Helmholtz \cite{Helmholtz}. It is a two line computation on the Fourier space side.
\end{remark}

Finally, we will introduce cylindrical coordinates, and axisymmetric vector fields.

\begin{definition}
For all $x\in \mathbb{R}^3$,
\begin{align}
    r&=\sqrt{x_1^2+x_2^2} \\
    z&= x_3 \\
    e_r&=(\cos\theta,\sin\theta,0) \\
    e_\theta&= (-\sin\theta,\cos\theta,0) \\
    e_z&= e_3.
\end{align}
\end{definition}

\begin{definition}
Suppose $u\in H^s_{df}\left(\mathbb{R}^3\right),
s>\frac{3}{2}$,
then we will say that $u$ is axisymmetric and swirl-free if and only if 
for all $x\in\mathbb{R}^3$
\begin{equation}
    u(x)=u_r(r,z)e_r+u_z(r,z)e_z.
\end{equation}
\end{definition}

\begin{definition}
Suppose $\omega\in H^s_{df}\left(\mathbb{R}^3\right),
s>\frac{3}{2}$,
then we will say that $\omega \in H^s_*
\left(\mathbb{R}^3\right)$
if and only if 
for all $x\in\mathbb{R}^3$
\begin{equation}
    \omega(x)=\omega_\theta(r,z)e_\theta.
\end{equation}
\end{definition}

\begin{remark}
We define these spaces analogously for homogeneous Hilbert spaces. Note that these two spaces have the following relationship. $u\in \dot{H}^1_{df}\cap H^{s+1}_{df}$ is axisymmetric and swirl-free if and only if $\nabla\times u\in H^s_*$. For this reason, we will say that $\omega$ is an axisymmetric, swirl-free solution of the inviscid vortex stretching equation if $\omega\in H^s_*$, which means it is the vorticity associated with axisymmetric, swirl-free velocity.
These are the solutions where $u$ is axisymmetric and swirl-free.
\end{remark}

\begin{definition}
To avoid confusion with two dimensional space, we will refer to all of $\mathbb{R}^3$ in cylindrical coordinates as
\begin{equation}
    \Pi=\left\{(r,z): r\geq 0,
    z\in\mathbb{R}\right\},
\end{equation}
and the upper half space $\mathbb{R}^3_+$ in cylindrical corrdinates as
\begin{equation}
    \Pi_+=\left\{(r,z): r\geq 0,
    z\geq 0 \right\}.
\end{equation}
For $\omega\in H^s_*$ will let $\Omega \subset \Pi$ be the support of the vorticity in cylindrical coordinates,
\begin{equation}
    \Omega= \supp(\omega_\theta),
\end{equation}
and we will let $\Omega_+\subset \Pi_+$ be the support of the vorticity in the upper half space,
\begin{equation}
    \Omega_+=\Omega \cap \Pi_+.
\end{equation}
\end{definition}

\section{Local wellposedness} \label{SectionLocalWP}

We will begin by defining a bilinear operator related to the inviscid vortex stretching equation and proving a key bound on this operator.

\begin{definition}
For all $s>\frac{3}{2}$, we define the bilinear operator $B:H^s_{df}\left(\mathbb{R}^3\right)
\times H^s_{df}\left(\mathbb{R}^3\right)
\to H^s_{df}\left(\mathbb{R}^3\right)$ by
\begin{equation}
    B\left(\mathbf{\omega},
    \mathbf{\Tilde{\omega}}\right)
    =
    \frac{1}{2}P_{df}\left((\omega\cdot\nabla)
    \Tilde{u}+(\Tilde{\omega}\cdot\nabla)u\right).
\end{equation}
\end{definition}

\begin{lemma}
For all $\omega,\Tilde{\omega} \in
H^s_{df}\left(\mathbb{R}^3\right), s>\frac{3}{2}$,
\begin{equation}
    \|B(\omega,\Tilde{\omega})\|_{H^s}
    \leq 
    C_s
    \|\omega\|_{H^s}
    \|\Tilde{\omega}\|_{H^s}
\end{equation}
\end{lemma}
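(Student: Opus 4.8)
The plan is to strip off the Helmholtz projection, reduce everything to scalar products, and then apply the algebra property of $H^s$ together with the isometry of Proposition \ref{VortIsometry}. First, since $P_{df}$ satisfies $\|P_{df}(v)\|_{H^s}\leq\|v\|_{H^s}$ by Lemma \ref{HelmholtzDecomp}, and applying the triangle inequality to the two advective terms, it suffices to bound $\|(\omega\cdot\nabla)\tilde u\|_{H^s}$ and $\|(\tilde\omega\cdot\nabla)u\|_{H^s}$ each by $C_s\|\omega\|_{H^s}\|\tilde\omega\|_{H^s}$; the prefactor $\tfrac12$ then recombines these into the stated inequality.

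Next I would expand in components. The $j$-th component of $(\omega\cdot\nabla)\tilde u$ is $\sum_{i=1}^3 \omega_i\,\partial_i\tilde u_j$, a finite sum of scalar products. Applying Lemma \ref{HilbertAlgebra} to each product and then Cauchy--Schwarz in the summation index $i$ keeps the constant equal to $C_s$:
\begin{equation}
\left\|\sum_{i=1}^3 \omega_i\,\partial_i\tilde u_j\right\|_{H^s}
\leq C_s\left(\sum_{i=1}^3\|\omega_i\|_{H^s}^2\right)^{1/2}
\left(\sum_{i=1}^3\|\partial_i\tilde u_j\|_{H^s}^2\right)^{1/2}
= C_s\,\|\omega\|_{H^s}\,\|\nabla\tilde u_j\|_{H^s}.
\end{equation}
Squaring this and summing over $j$ collapses $\sum_j\|\nabla\tilde u_j\|_{H^s}^2$ to $\|\nabla\tilde u\|_{H^s}^2$, which yields $\|(\omega\cdot\nabla)\tilde u\|_{H^s}\leq C_s\|\omega\|_{H^s}\|\nabla\tilde u\|_{H^s}$.

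Finally I would invoke Proposition \ref{VortIsometry}, which gives $\|\nabla\tilde u\|_{H^s}=\|\tilde\omega\|_{H^s}$ since $\tilde u=\nabla\times(-\Delta)^{-1}\tilde\omega$ and $\tilde\omega$ is divergence free, to conclude $\|(\omega\cdot\nabla)\tilde u\|_{H^s}\leq C_s\|\omega\|_{H^s}\|\tilde\omega\|_{H^s}$. The term $\|(\tilde\omega\cdot\nabla)u\|_{H^s}$ is handled identically, exchanging the roles of $\omega$ and $\tilde\omega$, and the two bounds combine with the factor $\tfrac12$ to give the claim.

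The only genuine subtlety, and the step I would be most careful about, is the bookkeeping of the component sums so that the constant remains exactly $C_s$ rather than a dimension-dependent multiple of it. Using Cauchy--Schwarz in the index $i$ (rather than a crude triangle inequality over the nine products) is precisely what preserves the $\ell^2$ structure of the vector-valued $H^s$ norm and reproduces the clean constant. Everything else is a direct application of the three quoted results.
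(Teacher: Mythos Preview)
Your proof is correct and follows essentially the same route as the paper's: strip off $P_{df}$ via Lemma~\ref{HelmholtzDecomp}, apply the algebra bound of Lemma~\ref{HilbertAlgebra}, then invoke the isometry of Proposition~\ref{VortIsometry}, and recombine with the factor $\tfrac12$. The only difference is that you spell out the component-wise application of Lemma~\ref{HilbertAlgebra} (which is stated for scalars) using Cauchy--Schwarz in the summation index to preserve the constant $C_s$, whereas the paper applies the algebra inequality directly to the vector expression $(\omega\cdot\nabla)\tilde u$ without unpacking the indices.
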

\begin{proof}
Applying the boundedness of the Helmholtz projection from Lemma \ref{HelmholtzDecomp}, the inequality in Lemma \ref{HilbertAlgebra}, and the isometry in Proposition \ref{VortIsometry}, we find that
\begin{align}
    \left\|P_{df}\left((\omega\cdot\nabla)
    \Tilde{u}\right)\right\|_{H^s}
    &\leq
    \left\|(\omega\cdot\nabla)
    \Tilde{u}\right\|_{H^s} \\
    &\leq 
    C_s\|\omega\|_{H^s} 
    \|\nabla \Tilde{u}\|_{H^s} \\
    &= 
    C_s \|\omega\|_{H^s} \|\Tilde{\omega}\|_{H^s}.
\end{align}
By precisely the same arguments we can see that 
\begin{equation}
    \left\|P_{df}\left((\Tilde{\omega}\cdot\nabla)
    u\right)\right\|_{H^s}
    \leq 
    C_s \|\omega\|_{H^s} \|\Tilde{\omega}\|_{H^s}.
\end{equation}
Finally, we can conlude that
\begin{align}
    \|B(\omega,\Tilde{\omega})\|_{H^s}
    &\leq 
    \frac{1}{2}\left\|P_{df}\left((\omega\cdot\nabla)
    \Tilde{u}\right)\right\|_{H^s}
    +\frac{1}{2}\left\|P_{df}\left(
    (\Tilde{\omega}\cdot\nabla)
    u\right)\right\|_{H^s} \\
    &\leq 
    C_s \|\omega\|_{H^s} \|\Tilde{\omega}\|_{H^s},
\end{align}
and this completes the proof.
\end{proof}

With this bound, we can prove the local wellposedness of the inviscid vortex stretching equation using the classic Picard iteration scheme.
We will now prove Theorem \ref{WellPosedIntro}, which will be broken into several pieces to aid the flow of the proofs.

\begin{theorem} \label{LocalWP}
For all $\omega^0\in H^s_{df}\left(\mathbb{R}^3\right), s>\frac{3}{2}$,
there exists a unique strong solution of the inviscid vortex stretching equation
$\omega\in C^1\left([0,T];
H^s_{df}\left(\mathbb{R}^3\right)\right)$,
where
\begin{equation}T<\frac{1}
{4C_s\left\|\omega^0\right\|_{H^s}},
\end{equation}
and furthermore 
\begin{equation}
    \|\omega\|_{C_T H^s_x}
    < 
    2\left\|\omega^0\right\|_{H^s}.
\end{equation}
\end{theorem}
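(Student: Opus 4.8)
The plan is to prove local existence and uniqueness via a standard Picard iteration on the integral formulation of the equation, using the quadratic bound on $B$ that was just established. Writing the inviscid vortex stretching equation as $\partial_t \omega = B(\omega,\omega)$, I would recast it as the fixed-point problem
\begin{equation}
    \omega(t) = \omega^0 + \int_0^t B(\omega(\tau),\omega(\tau)) \diff\tau
\end{equation}
on the Banach space $X = C\left([0,T]; H^s_{df}\right)$, equipped with the norm $\|\omega\|_X = \sup_{0\leq t\leq T}\|\omega(\cdot,t)\|_{H^s}$. The goal is to show that for $T$ small enough (specifically $T < \frac{1}{4C_s\|\omega^0\|_{H^s}}$), the solution map is a contraction on the closed ball $\overline{B}(0,2\|\omega^0\|_{H^s})$ in $X$.

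First I would verify the self-mapping property. Given $\omega \in \overline{B}(0,2\|\omega^0\|_{H^s})$, define $\Phi(\omega)(t) = \omega^0 + \int_0^t B(\omega,\omega)\diff\tau$. Using the bilinear bound $\|B(\omega,\omega)\|_{H^s} \leq C_s\|\omega\|_{H^s}^2$, I estimate
\begin{equation}
    \|\Phi(\omega)(t)\|_{H^s} \leq \|\omega^0\|_{H^s} + \int_0^t C_s \|\omega(\tau)\|_{H^s}^2 \diff\tau \leq \|\omega^0\|_{H^s} + T C_s \left(2\|\omega^0\|_{H^s}\right)^2.
\end{equation}
Choosing $T < \frac{1}{4C_s\|\omega^0\|_{H^s}}$ makes the second term strictly less than $\|\omega^0\|_{H^s}$, so $\|\Phi(\omega)\|_X < 2\|\omega^0\|_{H^s}$ and $\Phi$ maps the ball into itself. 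Next, for the contraction estimate, I would exploit bilinearity: for two elements $\omega,\Tilde{\omega}$ of the ball, $B(\omega,\omega) - B(\Tilde{\omega},\Tilde{\omega}) = B(\omega - \Tilde{\omega}, \omega + \Tilde{\omega})$, so $\|\Phi(\omega) - \Phi(\Tilde{\omega})\|_X \leq T C_s \cdot 4\|\omega^0\|_{H^s}\|\omega - \Tilde{\omega}\|_X$, and the same smallness of $T$ forces the Lipschitz constant below $1$. The Banach fixed-point theorem then yields a unique fixed point in the ball, which is the desired mild solution, and uniqueness in the full space follows from a standard Gronwall argument comparing any two solutions.

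The remaining step is to upgrade the mild solution to a genuine $C^1$ strong solution. Since $\omega \in C\left([0,T];H^s_{df}\right)$ and $B$ is bounded and continuous as a bilinear map into $H^s_{df}$, the map $t \mapsto B(\omega(t),\omega(t))$ is continuous in $H^s$, so the integral formula shows $\partial_t \omega = B(\omega,\omega)$ exists and is continuous in $H^s$, giving $\omega \in C^1\left([0,T];H^s_{df}\right)$; the boundedness of $P_{df}$ into $H^s_{df}$ guarantees the solution stays in the divergence-free space for all time. I expect the main obstacle to be mostly bookkeeping rather than deep analysis: the heart of the argument is entirely driven by the algebra property of $H^s$ for $s > \frac{3}{2}$ and the boundedness of the Helmholtz projection, both already in hand. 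The one point requiring a little care is ensuring that the continuity in time of $B(\omega(t),\omega(t))$, needed for the $C^1$ conclusion, follows cleanly from continuity of $\omega$ together with the bilinear bound—this is where one must use $\|B(\omega,\omega)-B(\Tilde{\omega},\Tilde{\omega})\|_{H^s} \leq 4C_s\|\omega^0\|_{H^s}\|\omega-\Tilde{\omega}\|_{H^s}$ evaluated along the time variable.
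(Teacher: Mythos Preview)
Your proposal is correct and follows essentially the same approach as the paper: the integral reformulation, the Picard iteration on the ball of radius $2\|\omega^0\|_{H^s}$ in $C\left([0,T];H^s_{df}\right)$, the self-mapping and contraction estimates via the bilinear bound and the identity $B(\omega,\omega)-B(\Tilde{\omega},\Tilde{\omega})=B(\omega-\Tilde{\omega},\omega+\Tilde{\omega})$, and the $C^1$ upgrade from continuity of $t\mapsto B(\omega(t),\omega(t))$ all match the paper's proof almost line for line.
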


\begin{proof}
By the fundamental theorem of calculus, we can conclude that $\omega \in C\left(\left[0,T\right];
H^s_{df}\left(\mathbb{R}^3\right)\right)$, is a strong solution of the inviscid vortex stretching equation if and only if for all $0\leq t\leq T$
\begin{equation} \label{IntegralForm}
    \omega(\cdot,t)=\omega^0+\int_0^t
    B(\omega,\omega)(\cdot,\tau) \diff\tau. 
\end{equation}
Note, in particular that if \eqref{IntegralForm} holds, then we have
\begin{equation}
    \partial_t \omega =B(\omega,\omega),
\end{equation}
and therefore the evolution equation is satisfied, and we have
$\partial_t\omega \in C\left(\left[0,T\right];
H^s_{df}\left(\mathbb{R}^3\right)\right)$.

Now we will show that there is a unique solution to the integral equation \eqref{IntegralForm} using Picard iteration.
First we will define $X \subset C\left([0,T];
H^s_{df}\right)$ by
\begin{equation}
X=\left\{\omega \in C\left([0,T];
H^s_{df}\right): 
\|\omega\|_{C_t H^s_x}\leq 2\left\|\omega^0\right\|_{H^s}
\right\},
\end{equation}
and the map $Q: X\to X$ by
\begin{equation}
    Q[w](\cdot,t)
    =
    \omega^0
    +\int_0^t B(w,w)(\cdot,\tau) \diff\tau.
\end{equation}
First we will confirm that $Q$ is in fact an automorphism on $X$.
Suppose $w \in X$.
For all $0\leq t\leq T$,
\begin{align}
    \|Q[w](\cdot,t)\|_{H^s}
    &\leq 
    \left\|\omega^0\right\|_{H^s}
    +\int_0^t\|B(\omega,\omega)
    (\cdot,\tau)\|_{H^s} \diff\tau \\
    &\leq 
    \left\|\omega^0\right\|_{H^s}
    +\int_0^t\|B(\omega,\omega)
    (\cdot,\tau)\|_{H^s} \diff\tau \\
    &\leq 
    \left\|\omega^0\right\|_{H^s}
    +C_s T \|\omega(\cdot,\tau)
    \|^2_{C_T H^s_x} \\
    & \leq
    \left\|\omega^0\right\|_{H^s}
    +4C_s T \left\|\omega^0\right\|^2_{H^s} \\
    &<
    2 \left\|\omega^0\right\|_{H^s}.
\end{align}

Next, we will show that $Q$ is a contraction mapping on $X$. Take $w,\Tilde{w}\in X$, and also let
\begin{align}
    v&= \nabla\times (-\Delta)^{-1}w \\
    \Tilde{v}&= \nabla\times 
    (-\Delta)^{-1}\Tilde{w}.
\end{align}
We can see that
\begin{equation}
    Q[w](\cdot,t)-Q[\Tilde{w}](\cdot,t)
    =\int_0^t B(w,w)(\cdot,\tau)-
    B(\Tilde{w},\Tilde{w})(\cdot,\tau)
    \diff\tau.
\end{equation}
We then compute that
\begin{align}
    B(w,w)-B(\Tilde{w},\Tilde{w})
    &=
    P_{df}((w\cdot\nabla)v-
    (\Tilde{w}\cdot\nabla)\Tilde{v}) \\
    &=
    \frac{1}{2}P_{df}((w+\Tilde{w})
    \cdot\nabla(v-\Tilde{v})
    +(w-\Tilde{w})
    \cdot\nabla(v+\Tilde{v})) \\
    &=
    B(w+\Tilde{w},w-\Tilde{w})
\end{align}
Then for all $0\leq t \leq T$,
\begin{align}
    \|Q[w](\cdot,t)-Q[\Tilde{w}](\cdot,t)\|_{H^s}
    &\leq 
    \int_0^t \|B(w+\Tilde{w},w-\Tilde{w})
    (\cdot,\tau)\|_{H^s} \diff\tau \\
    &\leq 
    C_s \int_0^t \|w(\cdot,\tau)
    +\Tilde{w}(\cdot,\tau)\|_{H^s}
    \|w(\cdot,\tau)
    -\Tilde{w}(\cdot,\tau)\|_{H^s} \diff\tau \\
    &\leq 
    4C_s \left\|\omega^0\right\|_{H^s} T
    \|w-\Tilde{w}\|_{C_T H^s_x}.
\end{align}
Observe that 
$4C_s \left\|\omega^0\right\|_{H^s} T<1$,
and so $Q$ is a contraction mapping,
and by the Picard contraction mapping theorem, there exists a unique $\omega\in X$ such that
\begin{equation}
    \omega=Q[\omega].
\end{equation}
We have now shown there exists a unique 
$\omega \in C_T H^s_x$ such that 
for all $0\leq t\leq T$,
\begin{equation}
    \omega(\cdot,t)
    =
    \omega^0
    +\int_0^t B(\omega,\omega)(\cdot,t)
    \diff\tau.
\end{equation}
We have shown that this is an equivalent condition to $\omega$ being a strong solution of the inviscid vortex stretching equation, and so this completes the proof.
\end{proof}

\begin{corollary} \label{BlowupTimeBound}
For all $\omega^0\in H^s_{df}\left(\mathbb{R}^3\right), s>\frac{3}{2}$,
there exists a unique strong solution of the inviscid vortex stretching equation
$\omega\in C^1\left(\left[0,T_{max}\right);
H^s_{df}\left(\mathbb{R}^3\right)\right)$, where \begin{equation}
T_{max}
\geq 
\frac{1}{2C_s\left\|\omega^0\right\|_{H^s}}.
\end{equation}
\end{corollary}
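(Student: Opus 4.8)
The plan is to upgrade the fixed-lifespan existence of Theorem \ref{LocalWP} to a maximal solution carrying the quantitative lower bound, via the standard continuation-plus-a-priori-estimate scheme. First I would define $T_{max}$ to be the supremum of all $T>0$ for which a unique strong solution exists on $[0,T]$; Theorem \ref{LocalWP} guarantees this set is nonempty, so $T_{max}>0$. If $\omega$ and $\tilde\omega$ are strong solutions on intervals $[0,T]$ and $[0,\tilde T]$ with $T\leq\tilde T$, then restricting to $[0,T]$ and invoking the uniqueness clause of Theorem \ref{LocalWP} forces $\omega=\tilde\omega$ there. Hence the solutions for $T<T_{max}$ are mutually consistent and glue to a single solution $\omega\in C^1\left([0,T_{max});H^s_{df}\right)$.

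Second, I would establish the continuation criterion that a finite $T_{max}$ forces the $H^s$ norm to blow up. Suppose, for contradiction, that $T_{max}<\infty$ and $\|\omega(\cdot,t)\|_{H^s}\leq M$ for all $t\in[0,T_{max})$. Choose $t_0$ with $T_{max}-t_0<\frac{1}{4C_s M}$ and apply Theorem \ref{LocalWP} with initial data $\omega(\cdot,t_0)$. Since $\|\omega(\cdot,t_0)\|_{H^s}\leq M$, the theorem yields a strong solution on $[t_0,t_0+T]$ for every $T<\frac{1}{4C_s\|\omega(\cdot,t_0)\|_{H^s}}$, and $\frac{1}{4C_s\|\omega(\cdot,t_0)\|_{H^s}}\geq\frac{1}{4C_s M}>T_{max}-t_0$. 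By uniqueness this solution agrees with $\omega$ on $[t_0,T_{max})$ and therefore extends $\omega$ strictly past $T_{max}$, contradicting the maximality of $T_{max}$.

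Third comes the a priori estimate. The integral form \eqref{IntegralForm} together with the bilinear bound gives, writing $y(t):=\|\omega(\cdot,t)\|_{H^s}$,
\begin{equation}
    y(t)\leq y(0)+C_s\int_0^t y(\tau)^2\diff\tau.
\end{equation}
Comparing with the solution of $\bar y'=C_s\bar y^2,\ \bar y(0)=\|\omega^0\|_{H^s}$, namely
\begin{equation}
    \bar y(t)=\frac{\|\omega^0\|_{H^s}}{1-C_s\|\omega^0\|_{H^s}\,t},
\end{equation}
a Bihari/Gronwall-type comparison yields $y(t)\leq\bar y(t)$ on every interval where the right-hand side remains finite. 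In particular $y(t)<2\|\omega^0\|_{H^s}$ for all $t<\frac{1}{2C_s\|\omega^0\|_{H^s}}$, so the norm stays bounded on that half-open interval.

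Finally I would combine the two: the a priori bound shows $\|\omega(\cdot,t)\|_{H^s}$ remains bounded by $2\|\omega^0\|_{H^s}$ throughout $\left[0,\frac{1}{2C_s\|\omega^0\|_{H^s}}\right)$, while the continuation criterion says a finite $T_{max}$ would require this norm to become unbounded as $t\to T_{max}^-$; hence $T_{max}\geq\frac{1}{2C_s\|\omega^0\|_{H^s}}$. I expect the main obstacle to be the rigorous justification of the comparison $y\leq\bar y$ from the nonlinear integral inequality—this needs a Bihari-type lemma and the fact that $t\mapsto y(t)$ is continuous (indeed locally Lipschitz, since $\|\omega(\cdot,t_2)-\omega(\cdot,t_1)\|_{H^s}\leq C_s\int_{t_1}^{t_2}y(\tau)^2\diff\tau$)—together with making the gluing and restart precise through repeated appeals to uniqueness in Theorem \ref{LocalWP}. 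The remaining bookkeeping is routine.
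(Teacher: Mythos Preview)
Your proof is correct, but it follows a genuinely different route from the paper's. The paper never derives a continuous a priori estimate; instead it iterates Theorem \ref{LocalWP} discretely. Fixing $0<\epsilon<1$, it applies the theorem on $[0,T_0]$ with $T_0=\frac{1-\epsilon}{4C_s\|\omega^0\|_{H^s}}$, obtaining $\|\omega(\cdot,T_0)\|_{H^s}<2\|\omega^0\|_{H^s}$; restarting from $\omega(\cdot,T_0)$ gives a further lifespan $T_1>\frac{1-\epsilon}{8C_s\|\omega^0\|_{H^s}}$; inductively the norm bound at most doubles at each step, so $T_k>\frac{1-\epsilon}{4C_s\|\omega^0\|_{H^s}}2^{-k}$, and summing the geometric series yields $T_{max}>\frac{1-\epsilon}{2C_s\|\omega^0\|_{H^s}}$. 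Letting $\epsilon\to 0$ finishes. What each approach buys: the paper's argument is entirely elementary---no Bihari lemma, no explicit continuation criterion, just repeated use of the black box Theorem \ref{LocalWP} plus a geometric series. Your approach is the cleaner, more standard continuation-plus-ODE-comparison scheme and in fact proves more: your bound $y(t)\leq \frac{y(0)}{1-C_s y(0)t}$ stays finite all the way to $t=\frac{1}{C_s\|\omega^0\|_{H^s}}$, so your argument actually yields $T_{max}\geq \frac{1}{C_s\|\omega^0\|_{H^s}}$, twice the stated bound. One small point worth tightening: the uniqueness clause in Theorem \ref{LocalWP} is literally only asserted for $T<\frac{1}{4C_s\|\omega^0\|_{H^s}}$, so your gluing in Step 1 for arbitrary $T$ really does require the ``repeated appeals to uniqueness on short subintervals'' you allude to at the end; this is routine but should be made explicit.
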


\begin{proof}
We begin by fixing $0<\epsilon<1$, which can be taken to be arbitrarily small, and by letting
\begin{equation}
    T_0=\frac{1-\epsilon}{4C_s
    \left\|\omega^0\right\|_{H^s}}
\end{equation}
Applying Theorem \ref{LocalWP}, we can see that there exists a unique strong solution of the inviscid vortex stretching equation
$\omega\in C^1\left(\left[0,T_0\right];
H^s_{df}\left(\mathbb{R}^3\right)\right)$
and furthermore that
\begin{equation}
    \|\omega\|_{C_{T_0} H^s_x}
    <
    2\left\|\omega^0\right\|_{H^s}.
\end{equation}

Now we will take
\begin{equation}
    \omega^1=\omega\left(\cdot,T_0\right),
\end{equation}
observing that
\begin{equation}
    \left\|\omega^1\right\|_{H^s}
    < 
    2\left\|\omega^0\right\|_{H^s}.
\end{equation}
Again applying Theorem \ref{LocalWP}, we can conclude that there exists a unique strong solution of the inviscid vortex stretching equation
$\omega\in C^1\left(\left[0,T_0+T_1\right];
H^s_{df}\left(\mathbb{R}^3\right)\right)$,
where 
\begin{align}
    T_1
    &=
    \frac{1-\epsilon}{4C_s 
    \left\|\omega^1\right\|_{H^s}} \\
    &>
    \frac{1-\epsilon}{8C_s 
    \left\|\omega^0\right\|_{H^s}}.
\end{align}
and for all $0\leq t \leq T_0+T_1$,
\begin{align}
    \|\omega(\cdot,t)\|_{H^s}
    &<
    2 \left\|\omega^1\right\|_{H^s} \\
    &<
    4 \left\|\omega^0\right\|_{H^s}.
\end{align}

Our iteration scheme will continue in this way.
For all $k\in\mathbb{N}$, we will let
\begin{equation}
    \omega^k=\omega\left(\cdot,
    \sum_{j=0}^k T_j\right),
\end{equation}
and
\begin{equation}
    T_{k+1}=\frac{1-\epsilon}{4C_s
    \left\|\omega^k\right\|_{H^s}},
\end{equation}
Applying Theorem \ref{LocalWP}, we can see that
\begin{equation}
    \left\|\omega^{k+1}\right\|_{H^s}
    <
    2 \left\|\omega^k\right\|_{H^s}.
\end{equation}
By induction, for all $k\in \mathbb{N}$,
\begin{equation}
    \left\|\omega^k\right\|_{H^s}
    <
    2^k
    \left\|\omega^0\right\|_{H^s},
\end{equation}
and therefore
\begin{equation}
    T_{k} >\frac{1-\epsilon}{4C_s
    \left\|\omega^0\right\|_{H^s}} 
    \left(\frac{1}{2}\right)^k.
\end{equation}
Clearly by Theorem \ref{LocalWP},
\begin{align}
    T_{max}
    &\geq 
    \sum_{k=0}^{+\infty} T_k \\
    &>
    \frac{1-\epsilon}{4C_s
    \left\|\omega^0\right\|_{H^s}} 
    \sum_{k=0}^{+\infty} 
    \left(\frac{1}{2}\right)^k \\
    &= \label{EpsilonStep}
    \frac{1-\epsilon}{2C_s
    \left\|\omega^0\right\|_{H^s}}
\end{align}
Recall that $0<\epsilon<1$ is arbitrary, and so \eqref{EpsilonStep} holds for all $0<\epsilon<1$.
Taking the limit $\epsilon \to 0$, we can conclude that
\begin{equation}
    T_{max}
    \geq 
    \frac{1}{2C_s
    \left\|\omega^0\right\|_{H^s}}.
\end{equation}
This completes the proof.
\end{proof}

\begin{corollary} \label{HilbertGrowthLowerBound}
Suppose $\omega\in C^1\left(\left[0,T_{max}\right);
H^s_{df}\left(\mathbb{R}^3\right)\right), s>\frac{3}{2}$ is a solution to the inviscid vortex stretching equation.
If $T_{max}<+\infty,$ then for all $0\leq t<T_{max}$,
\begin{equation}
    \|\omega(\cdot,t)\|_{H^s}
    \geq
    \frac{1}{2C_s (T_{max}-t)}.
\end{equation}
\end{corollary}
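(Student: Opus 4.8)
The plan is to turn the existence-time lower bound of Corollary \ref{BlowupTimeBound} into a continuation criterion by exploiting the time-translation invariance of the inviscid vortex stretching equation. The equation is autonomous: the right-hand side $B(\omega,\omega)$ carries no explicit dependence on $t$. Hence if $\tilde{\omega}\in C^1\left([0,\tilde{T}];H^s_{df}\right)$ is a strong solution with $\tilde{\omega}(\cdot,0)=\omega(\cdot,t)$, then the time shift $s\mapsto\tilde{\omega}(\cdot,s-t)$ is a strong solution on $[t,t+\tilde{T}]$ taking the value $\omega(\cdot,t)$ at $s=t$. This is the mechanism that lets me "restart the clock" at an arbitrary time $t$.

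First I would fix $t\in[0,T_{max})$ and regard $\omega(\cdot,t)\in H^s_{df}\left(\mathbb{R}^3\right)$ as new initial data. Applying Corollary \ref{BlowupTimeBound} to this datum produces a strong solution on a time interval of length at least $\frac{1}{2C_s\|\omega(\cdot,t)\|_{H^s}}$. Shifting this solution forward by $t$ as above, and invoking the uniqueness assertion in Theorem \ref{LocalWP} on the overlap to glue it with the original solution $\omega$, I obtain a strong solution of the inviscid vortex stretching equation on the interval $\left[0,\,t+\frac{1}{2C_s\|\omega(\cdot,t)\|_{H^s}}\right)$ that extends $\omega$. Since $T_{max}$ is by definition the supremum of times to which the solution with initial data $\omega^0$ can be continued, this extension forces
\[
    t+\frac{1}{2C_s\|\omega(\cdot,t)\|_{H^s}}\leq T_{max},
\]
and rearranging gives exactly $\|\omega(\cdot,t)\|_{H^s}\geq\frac{1}{2C_s(T_{max}-t)}$, which is the claim. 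Note the hypothesis $T_{max}<+\infty$ is what makes the right-hand side finite and meaningful.

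The step I expect to be the main obstacle is the gluing/continuation argument: verifying rigorously that the solution locally constructed from the datum $\omega(\cdot,t)$ coincides with $\omega$ wherever the two are both defined, so that the pieces patch together into a genuine $C^1\left([0,\cdot);H^s_{df}\right)$ solution on the enlarged interval. This rests squarely on time-translation invariance together with the uniqueness statement of Theorem \ref{LocalWP}, and it is the only place where care is needed; once the extension is in hand, the maximality of $T_{max}$ and elementary algebra finish the proof immediately.
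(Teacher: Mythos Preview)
Your proposal is correct and is essentially the same as the paper's proof: treat $\omega(\cdot,t)$ as initial data, apply Corollary \ref{BlowupTimeBound} to obtain $T_{max}-t\geq \frac{1}{2C_s\|\omega(\cdot,t)\|_{H^s}}$, and rearrange. You have simply spelled out the time-translation and gluing step in more detail than the paper does.
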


\begin{proof}
We will treat $\omega(\cdot,t)$ as initial data. Applying Corollary \ref{BlowupTimeBound}, we find that
\begin{equation}
    T_{max}-t
    \geq
    \frac{1}{2C_s\left\|\omega(\cdot,t)
    \right\|_{H^s}},
\end{equation}
and this completes the proof.
\end{proof}

\section{Axisymmetric, swirl-free solutions}
\label{SectionBlowupAS}

In this section, we will consider axisymmetric, swirl-free solutions of the inviscid vortex stretching equation, and will show that this equation exhibits finite-time blowup for a subset of these solutions with colliding vortex ring geometry.
We begin by giving a Biot-Savart law for axisymmetric swirl-free vector fields.

\begin{proposition} \label{BiotSavart}
    For all $u \in \dot{H}^1_{df}\cap \dot{H}^{s+1}_{df}, s> \frac{3}{2}, u$ is axisymmetric and swirl-free if and only if
    \begin{equation}
        \omega(x)=\omega_{\theta}(r,z)e_\theta.
    \end{equation}
    Furthermore, if $u$ is axisymmetric and swirl-free, then
    \begin{align}
        u_r(r,z) &=
        \frac{1}{4\pi} \int_{-\infty}^{+\infty}
        \int_0^{+\infty} 
        \int_{-\pi}^\pi
        \frac{\Bar{r}(z-\Bar{z})\cos(\phi)}
        {\left((z-\Bar{z})^2+r^2+\Bar{r}^2
        -2r\Bar{r}\cos(\phi)\right)^\frac{3}{2}}
        \omega_{\theta}(\Bar{r},\Bar{z})
        \diff\phi \diff\Bar{r} \diff\Bar{z}\\
        u_z(r,z) &= 
        \frac{1}{4\pi} 
        \int_{-\infty}^{\infty}
        \int_0^\infty 
        \int_{-\pi}^\pi
        \frac{\bar{r}\left(z-\bar{z}\right)
        (\bar{r}-r\cos(\phi))}
        {\left((z-\bar{z})^2+r^2+\bar{r}^2
        -2r\bar{r} \cos(\phi)
        \right)^\frac{3}{2}} 
        \omega_\theta(\bar{r},\bar{z})
        \diff\phi \diff\bar{r} \diff\bar{z}
    \end{align}
\end{proposition}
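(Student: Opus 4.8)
The plan is to prove two things: the equivalence between $u$ being axisymmetric and swirl-free and $\omega$ having the form $\omega_\theta(r,z)e_\theta$, and the two explicit integral representations for $u_r$ and $u_z$. I would dispatch the ``only if'' direction by a direct curl computation in cylindrical coordinates, and derive the integral formulas from the three-dimensional Biot--Savart kernel; the same computation also yields the ``if'' direction, so the two halves of the statement are handled together.

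For the ``only if'' direction, suppose $u=u_r(r,z)e_r+u_z(r,z)e_z$. Writing the curl in cylindrical coordinates and using that there is no swirl component and no $\theta$-dependence, every term drops except the azimuthal one, giving $\nabla\times u=(\partial_z u_r-\partial_r u_z)e_\theta$, which is of the required form with $\omega_\theta=\partial_z u_r-\partial_r u_z$. For the converse and the formulas, I would start from $u=\nabla\times(-\Delta)^{-1}\omega$ and pass to the Newtonian potential representation: with $G(x)=\frac{1}{4\pi|x|}$ the fundamental solution of $-\Delta$ on $\mathbb{R}^3$, one has $(-\Delta)^{-1}\omega=G*\omega$, and since $\omega$ does not depend on $x$,
\begin{equation}
    u(x)=\nabla_x\times\int_{\mathbb{R}^3}G(x-y)\omega(y)\diff y
    =\frac{1}{4\pi}\int_{\mathbb{R}^3}\frac{\omega(y)\times(x-y)}{|x-y|^3}\diff y.
\end{equation}

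Because this operator is equivariant under rotations about the $z$-axis, an axisymmetric $\omega$ produces an axisymmetric $u$, so it suffices to evaluate $u$ at a point $x=(r,0,z)$, where $e_r,e_\theta,e_z$ are the standard basis vectors and $u_r=u_1$, $u_\theta=u_2$, $u_z=u_3$. Parametrizing $y=(\bar r\cos\phi,\bar r\sin\phi,\bar z)$ with volume element $\bar r\diff\phi\diff\bar r\diff\bar z$ and inserting $\omega(y)=\omega_\theta(\bar r,\bar z)(-\sin\phi,\cos\phi,0)$, one computes $|x-y|^2=(z-\bar z)^2+r^2+\bar r^2-2r\bar r\cos\phi$, which is exactly the denominator in both formulas. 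The remaining step is to expand the cross product $\omega(y)\times(x-y)$. Its $e_r$-component equals $\omega_\theta\cos\phi\,(z-\bar z)$, which together with the Jacobian $\bar r$ reproduces the integrand for $u_r$; its $e_z$-component supplies the corresponding numerator for $u_z$; and its $e_\theta$-component is proportional to $\sin\phi$. Since the denominator depends on $\phi$ only through $\cos\phi$ and is therefore even in $\phi$, the $e_\theta$-component is odd in $\phi$ and integrates to zero over $(-\pi,\pi)$, so $u_\theta=0$. This establishes the ``if'' direction (axisymmetry from the equivariance already invoked, swirl-freeness from the odd-symmetry cancellation) and simultaneously reads off the two claimed representations.

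The main obstacle is analytic rather than algebraic: justifying the passage from the Fourier-multiplier definition of $\nabla\times(-\Delta)^{-1}$ to the singular convolution kernel, and verifying that the resulting integral converges for vorticities in the stated regularity class $\dot H^1\cap \dot H^{s+1}$. One must check the integrable singularity of the kernel as $y\to x$, the decay needed for absolute convergence at infinity, and---since we are working in cylindrical coordinates---that no spurious singularity is introduced on the axis $r=0$. Making the rotational equivariance used to reduce to $\theta=0$ precise is a further small point. By contrast, the cross-product bookkeeping that produces the explicit numerators is entirely routine once the kernel representation is justified.
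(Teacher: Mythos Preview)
Your proposal is correct and follows essentially the same route as the paper: both start from the convolution form $u(x)=\frac{1}{4\pi}\int\frac{\omega(y)\times(x-y)}{|x-y|^3}\diff y$, pass to cylindrical coordinates, expand the cross product componentwise, and kill the swirl component by the odd-in-$\phi$ symmetry of the integrand. The only cosmetic difference is that you invoke rotational equivariance to set $\theta=0$ at the outset, whereas the paper carries a general $\theta$ through the computation and removes it at the end via the periodic shift $\phi\mapsto\phi-\theta$; the paper also does not address the analytic justification issues you flag, treating the kernel representation as given.
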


\begin{proof}
For the forward direction, this is a standard vector calculus identity. For the reverse direction, it suffices to show that when $\omega(x)=\omega_{\theta}(r,z) e_{\theta}$,
then for all $x\in\mathbb{R}^3$,
\begin{align}
    e_r \cdot u(x)&= 
        \frac{1}{4\pi} \int_{-\infty}^{+\infty}
        \int_0^{+\infty} 
        \int_{-\pi}^\pi
        \frac{\Bar{r}(z-\Bar{z})\cos(\phi)}
        {\left((z-\Bar{z})^2+r^2+\Bar{r}^2
        -2r\Bar{r}\cos(\phi)\right)^\frac{3}{2}}
        \omega_{\theta}(\Bar{r},\Bar{z})
        \diff\phi \diff\Bar{r} \diff\Bar{z} \\
    e_\theta \cdot u(x) &=0 \\
    e_3 \cdot u(x) &= 
        \frac{1}{4\pi} 
    \int_{-\infty}^{\infty}
    \int_0^\infty 
    \int_{-\pi}^\pi
    \frac{\bar{r}\left(z-\bar{z}\right)
    (\bar{r}-r\cos(\phi))}
    {\left((z-\bar{z})^2+r^2+\bar{r}^2
    -2r\bar{r} \cos(\phi)
    \right)^\frac{3}{2}} 
    \omega_\theta(\bar{r},\bar{z})
    \diff\phi \diff\bar{r} \diff\bar{z}.
\end{align}

We will begin by recalling that 
\begin{equation}
    u=\nabla \times (-\Delta)^{-1}\omega.
\end{equation}
We know that 
\begin{equation}
    (-\Delta)^{-1}\omega(x)
    =
    \frac{1}{4 \pi}\int_{\mathbb{R}^3}
    \frac{1}{|x-y|}\omega(y)\diff y,
\end{equation}
and so we can see that
\begin{align}
    u(x)
    &=
    \frac{1}{4 \pi}\int_{\mathbb{R}^3}
    \nabla\left(\frac{1}{|x-y|}\right)
    \times\omega(y)\diff y \\
    &=
    -\frac{1}{4 \pi}\int_{\mathbb{R}^3}
    \frac{x-y}{|x-y|^3} \times\omega(y)\diff y.
\end{align}
Moving to cylindrical coordinates we have
\begin{align}
    x&=(r\cos(\theta),r\sin(\theta),z) \\
    y&=(\bar{r}\cos(\phi),
    \bar{r}\sin(\phi),\bar{z}),
\end{align}
and so we can compute that
\begin{align}
    |x-y|^2
    &=
    (r\cos(\theta)-\bar{r}\cos(\phi))^2
    +(r\sin(\theta)-\bar{r}\sin(\phi))^2
    +(z-\bar{z})^2 \\
    &=
    (z-\bar{z})^2+r^2+\bar{r}^2
    -2r\bar{r}(\cos(\theta)\cos(\phi)
    +\sin(\theta)\sin(\phi)) \\
    &=
    (z-\bar{z})^2+r^2+\bar{r}^2
    -2r\bar{r} \cos(\phi-\theta).
\end{align}
Computing the radial component and switching to cylindrical coordinates, we have
\begin{align}
    e_r \cdot u(x)
    &=
    -\frac{1}{4\pi} 
    e_r \cdot
    \int_{-\infty}^{\infty}
    \int_0^\infty 
    \int_{-\pi}^\pi
    \bar{r} \frac{x-y}{|x-y|^3} \times 
    \omega_\theta(\bar{r},\bar{z}) e_\phi
    \diff \phi \diff\bar{r} \diff\bar{z} \\
    &=
    \frac{1}{4\pi} 
    \int_{-\infty}^{\infty}
    \int_0^\infty 
    \int_{-\pi}^\pi
    \bar{r} \omega_\theta(\bar{r},\bar{z})
    \frac{x-y}{|x-y|^3} \cdot  
    \left(\begin{array}{c}
          \cos(\theta)  \\
          \sin(\theta) \\
          0
    \end{array}\right)
    \times 
    \left(\begin{array}{c}
          -\sin(\phi)  \\
          \cos(\phi) \\
          0
    \end{array}\right)
    \diff \phi \diff\bar{r} \diff\bar{z} \\
    &=
    \frac{1}{4\pi} 
    \int_{-\infty}^{\infty}
    \int_0^\infty 
    \int_{-\pi}^\pi
    \frac{\bar{r}\left(z-\bar{z}\right)
    \omega_\theta(\bar{r},\bar{z})}
    {\left((z-\bar{z})^2+r^2+\bar{r}^2
    -2r\bar{r} \cos(\phi-\theta)
    \right)^\frac{3}{2}} 
    \cos(\phi-\theta)
    \diff \phi \diff\bar{r} \diff\bar{z} \\
    &=
    \frac{1}{4\pi} 
    \int_{-\infty}^{\infty}
    \int_0^\infty 
    \int_{-\pi}^\pi
    \frac{\bar{r}\left(z-\bar{z}\right)}
    {\left((z-\bar{z})^2+r^2+\bar{r}^2
    -2r\bar{r} \cos(\phi)
    \right)^\frac{3}{2}} 
    \omega_\theta(\bar{r},\bar{z})
    \cos(\phi)
    \diff \phi \diff\bar{r} \diff\bar{z}.
\end{align}
Note that in the last step we used that the integral is independent of $\theta$ because we are integrating over the whole period, and so a shift does not matter.

Likewise, we will compute that
\begin{align}
    e_\theta \cdot u(x)
    &=
    \frac{1}{4\pi} 
    \int_{-\infty}^{\infty}
    \int_0^\infty 
    \int_{-\pi}^\pi
    \bar{r} \omega_\theta(\bar{r},\bar{z})
    \frac{x-y}{|x-y|^3} \cdot  
    \left(\begin{array}{c}
          -\sin(\theta)  \\
          \cos(\theta) \\
          0
    \end{array}\right)
    \times 
    \left(\begin{array}{c}
          -\sin(\phi)  \\
          \cos(\phi) \\
          0
    \end{array}\right)
    \diff \phi \diff\bar{r} \diff\bar{z} \\
    &= 
    \frac{1}{4\pi} 
    \int_{-\infty}^{\infty}
    \int_0^\infty 
    \int_{-\pi}^\pi
    \frac{\bar{r}\left(z-\bar{z}\right)
    \omega_\theta(\bar{r},\bar{z})}
    {\left((z-\bar{z})^2+r^2+\bar{r}^2
    -2r\bar{r} \cos(\phi-\theta)
    \right)^\frac{3}{2}} 
    \sin(\phi-\theta)
    \diff \phi \diff\bar{r} \diff\bar{z} \\
    &=
    \frac{1}{4\pi} 
    \int_{-\infty}^{\infty}
    \int_0^\infty 
    \int_{-\pi}^\pi
    \frac{\bar{r}\left(z-\bar{z}\right)}
    {\left((z-\bar{z})^2+r^2+\bar{r}^2
    -2r\bar{r} \cos(\phi)
    \right)^\frac{3}{2}} 
    \omega_\theta(\bar{r},\bar{z})
    \sin(\phi)
    \diff \phi \diff\bar{r} \diff\bar{z} \\
    &=
    0.
\end{align}
Finally we will compute,
\begin{align}
    e_3 \cdot u(x)
    &=
    -\frac{1}{4\pi} 
    e_3 \cdot
    \int_{-\infty}^{\infty}
    \int_0^\infty 
    \int_{-\pi}^\pi
    \bar{r} \frac{x-y}{|x-y|^3} \times 
    \omega_\theta(\bar{r},\bar{z}) e_\phi
    \diff \phi \diff\bar{r} \diff\bar{z} \\
    &=
    \frac{1}{4\pi} 
    \int_{-\infty}^{\infty}
    \int_0^\infty 
    \int_{-\pi}^\pi
    \bar{r} \omega_\theta(\bar{r},\bar{z})
    \frac{x-y}{|x-y|^3} \cdot 
    \left(e_3 \times e_\phi \right)
    \diff \phi \diff\bar{r} \diff\bar{z} \\
    &=
    \frac{1}{4\pi} 
    \int_{-\infty}^{\infty}
    \int_0^\infty 
    \int_{-\pi}^\pi
    \bar{r} \omega_\theta(\bar{r},\bar{z})
    \frac{x-y}{|x-y|^3} \cdot 
    \left(\begin{array}{c}
         -\cos(\phi)  \\
         -\sin(\phi) \\
         0
    \end{array}\right)
    \diff \phi \diff\bar{r} \diff\bar{z}.
    \end{align}
We can compute that
\begin{align}
    (x-y)\cdot 
    \left(\begin{array}{c}
         -\cos(\phi)  \\
         -\sin(\phi) \\
         0
    \end{array}\right)  
    &=
    \cos(\phi)(\bar{r}\cos(\phi)
    -r\cos(\theta))
    +\sin(\phi)(\bar{r}\sin(\phi)
    -r\sin(\theta)) \\
    &=
    \bar{r}-r\cos(\phi-\theta).
\end{align}
Plugging this in we find that
\begin{align}
    e_3 \cdot u(x)
    &=
    \frac{1}{4\pi} 
    \int_{-\infty}^{\infty}
    \int_0^\infty 
    \int_{-\pi}^\pi
    \frac{\bar{r}\left(z-\bar{z}\right)
    (\bar{r}-r\cos(\phi-\theta))
    \omega_\theta(\bar{r},\bar{z})}
    {\left((z-\bar{z})^2+r^2+\bar{r}^2
    -2r\bar{r} \cos(\phi-\theta)
    \right)^\frac{3}{2}} 
    \diff\phi \diff\bar{r} \diff\bar{z} \\
    &=
    \frac{1}{4\pi} 
    \int_{-\infty}^{\infty}
    \int_0^\infty 
    \int_{-\pi}^\pi
    \frac{\bar{r}\left(z-\bar{z}\right)
    (\bar{r}-r\cos(\phi))}
    {\left((z-\bar{z})^2+r^2+\bar{r}^2
    -2r\bar{r} \cos(\phi)
    \right)^\frac{3}{2}} 
    \omega_\theta(\bar{r},\bar{z})
    \diff\phi \diff\bar{r} \diff\bar{z},
\end{align}
and this completes the proof.
\end{proof}

\begin{proposition} \label{BiotSavartOdd}
    Suppose $\omega \in H^s_*, s>\frac{3}{2}$,
    is a vorticity vector corresponding to an axisymetric, swirl-free vector field,
    and suppose $\omega_{\theta}(r,z)$ is odd in $z$.
    Then we have
\begin{multline}
    u_r(r,z)
    =
    \frac{1}{2\pi} \int_0^\infty \int_0^\infty
    \int_0^1
    \Bigg(
    \frac{\Bar{r}(z-\Bar{z})
    s\left(1-s^2\right)^{-\frac{1}{2}}}{\left((z-\Bar{z})^2+r^2+\Bar{r}^2
    -2r\Bar{r}s\right)^\frac{3}{2}}
    -\frac{\Bar{r}(z-\Bar{z})
    s\left(1-s^2\right)^{-\frac{1}{2}}}{\left((z-\Bar{z})^2+r^2+\Bar{r}^2
    +2r\Bar{r}s\right)^\frac{3}{2}}
    \\
    -\frac{\Bar{r}(z+\Bar{z})
    s\left(1-s^2\right)^{-\frac{1}{2}}}{\left((z+\Bar{z})^2+r^2+\Bar{r}^2
    -2r\Bar{r}s\right)^\frac{3}{2}}
    +\frac{\Bar{r}(z+\Bar{z})
    s\left(1-s^2\right)^{-\frac{1}{2}}}{\left((z+\Bar{z})^2+r^2+\Bar{r}^2
    +2r\Bar{r}s\right)^\frac{3}{2}}
    \Bigg) \omega_\theta(\bar{r},\bar{z})
    \diff s \diff\bar{r} \diff\bar{z},
\end{multline}
    noting in particular that $u_r(r,z)$ is even in $z$.
\end{proposition}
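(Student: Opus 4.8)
The plan is to start directly from the Biot--Savart representation of $u_r$ established in Proposition \ref{BiotSavart} and to carry out two successive reductions of the domain of integration: first folding the $\bar z$-integral onto $(0,\infty)$ using the oddness of $\omega_\theta$ in $z$, and then collapsing the angular $\phi$-integral onto the variable $s\in(0,1)$. Once the integrand is in the displayed form, the evenness in $z$ will follow by inspection.

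For the first reduction I would split $\int_{-\infty}^{\infty}\diff\bar z=\int_{-\infty}^{0}+\int_{0}^{\infty}$ and substitute $\bar z\mapsto-\bar z$ in the first piece. Since $\omega_\theta(\bar r,-\bar z)=-\omega_\theta(\bar r,\bar z)$, and the substitution sends $(z-\bar z)$ to $(z+\bar z)$ while leaving $r$ and $\bar r$ untouched, the negative-$\bar z$ contribution becomes an integral over $(0,\infty)$ whose kernel carries $(z+\bar z)$ in place of $(z-\bar z)$ together with an overall extra sign. This is exactly what produces the four-term structure: the two $(z-\bar z)$ kernels come from the original positive-$\bar z$ part, and the two $(z+\bar z)$ kernels, with opposite sign, come from the folded part.

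For the second reduction I would exploit that the $\phi$-integrand is even in $\phi$, so that $\int_{-\pi}^{\pi}=2\int_0^{\pi}$, and then split $(0,\pi)$ at $\pi/2$. On $(0,\pi/2)$ I set $s=\cos\phi\in(0,1)$ and on $(\pi/2,\pi)$ I set $s=-\cos\phi\in(0,1)$; in both cases $\sin\phi=\sqrt{1-s^2}$, so that after reorienting the limits each piece becomes an integral $\int_0^1(1-s^2)^{-1/2}(\cdots)\,\diff s$. The first substitution turns a kernel $(\cdots-2r\bar{r}\cos\phi)^{-3/2}$ into $(\cdots-2r\bar{r}s)^{-3/2}$ with numerator factor $\cos\phi=s$, while the second turns it into $(\cdots+2r\bar{r}s)^{-3/2}$ with numerator factor $\cos\phi=-s$; thus each $(z\mp\bar z)$ kernel splits into the two displayed terms carrying $\mp 2r\bar{r}s$ in their denominators with opposite signs. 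The factor $2$ from the even symmetry combines with the $\tfrac{1}{4\pi}$ to give the prefactor $\tfrac{1}{2\pi}$, and collecting the four resulting terms reproduces the stated formula exactly.

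To see that $u_r$ is even in $z$ I would inspect the symmetry of the displayed integrand under $z\mapsto-z$: this sends $(z-\bar z)\mapsto-(z+\bar z)$ and $(z+\bar z)\mapsto-(z-\bar z)$ and interchanges the two families of denominators, which permutes the four terms among themselves (term $1\leftrightarrow$ term $3$ and term $2\leftrightarrow$ term $4$) without changing the total, so the integrand, and hence $u_r$, is invariant under $z\mapsto-z$. The step I expect to require the most care is the angular substitution: correctly matching the sign of the $\cos\phi$ numerator against the $\mp 2r\bar{r}s$ denominators on each of the two subintervals, so that the four terms emerge with exactly the signs shown, is the only place where a bookkeeping slip could corrupt the final formula, whereas the $\bar z$-folding and the evenness check are comparatively mechanical.
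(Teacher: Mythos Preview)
Your proposal is correct and follows essentially the same approach as the paper: both arguments reduce the Biot--Savart formula for $u_r$ via the evenness of the $\phi$-integrand and the oddness of $\omega_\theta$ in $\bar z$, then verify evenness in $z$ by inspection. The only cosmetic differences are that the paper performs the angular reduction first (via the single substitution $s=\cos\phi$ on $(0,\pi)$ followed by splitting $s\in(-1,0)\cup(0,1)$) and the $\bar z$-folding second, whereas you do these in the opposite order and split the $\phi$-interval at $\pi/2$; the two routes are equivalent and yield the same four-term kernel.
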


\begin{proof}
Recalling the identity for $u_r$ from Proposition \ref{BiotSavart}, we find that
\begin{align}
    u_r(r,z) 
    &=
        \frac{1}{4\pi} \int_{-\infty}^{\infty}
        \int_0^{\infty} 
        \int_{-\pi}^\pi
        \frac{\Bar{r}(z-\Bar{z})\cos(\phi)}
        {\left((z-\Bar{z})^2+r^2+\Bar{r}^2
        -2r\Bar{r}\cos(\phi)\right)^\frac{3}{2}}
        \omega_{\theta}(\Bar{r},\Bar{z})
        \diff\phi \diff\Bar{r} \diff\Bar{z}\\
    &=
        \frac{1}{2\pi} \int_{-\infty}^{\infty}
        \int_0^{\infty} 
        \int_0^\pi
        \frac{\Bar{r}(z-\Bar{z})\cos(\phi)}
        {\left((z-\Bar{z})^2+r^2+\Bar{r}^2
        -2r\Bar{r}\cos(\phi)\right)^\frac{3}{2}}
        \omega_{\theta}(\Bar{r},\Bar{z})
        \diff\phi \diff\Bar{r} \diff\Bar{z},
\end{align}
because the integrand is even in $\phi$.
Making the substitution
\begin{equation}
    s=\cos(\phi),
\end{equation}
we can see that
\begin{align}
    \diff s &
    =
    -\sin(\phi)\diff \phi \\
    &=
    -(1-s^2)^\frac{1}{2} \diff\phi.
\end{align}
Therefore, we may compute that
\begin{equation}
    u_r(r,z)
    =
    \frac{1}{2\pi} \int_{-\infty}^{\infty}
        \int_0^{\infty} 
        \int_{-1}^1
        \frac{\Bar{r}(z-\Bar{z})
        s\left(1-s^2\right)^{-\frac{1}{2}}}
        {\left((z-\Bar{z})^2+r^2+\Bar{r}^2
        -2r\Bar{r}s\right)^\frac{3}{2}}
        \omega_{\theta}(\Bar{r},\Bar{z})
        \diff s \diff\Bar{r} \diff\Bar{z}.
\end{equation}
Note that
\begin{equation}
    \int_{-1}^0
    \frac{s\left(1-s^2\right)^{-\frac{1}{2}}}
    {\left((z-\Bar{z})^2+r^2+\Bar{r}^2
    -2r\Bar{r}s\right)^\frac{3}{2}}
    \diff s
    =
    -\int_0^1
    \frac{s\left(1-s^2\right)^{-\frac{1}{2}}}
    {\left((z-\Bar{z})^2+r^2+\Bar{r}^2
    +2r\Bar{r}s\right)^\frac{3}{2}}
    \diff s.
\end{equation}
Plugging this in, we may conclude that
\begin{multline}
    u_r(r,z)
    =
    \frac{1}{2\pi} \int_{-\infty}^{\infty}
    \int_0^{\infty} 
    \int_0^1
    \left(\frac{1}{\left((z-\Bar{z})^2+r^2+\Bar{r}^2
    -2r\Bar{r}s\right)^\frac{3}{2}}
    -\frac{1}{\left((z-\Bar{z})^2+r^2+\Bar{r}^2
    +2r\Bar{r}s\right)^\frac{3}{2}}\right) \\
    \Bar{r}(z-\Bar{z})
    s\left(1-s^2\right)^{-\frac{1}{2}}
    \omega_{\theta}(\Bar{r},\Bar{z})
    \diff s \diff\Bar{r} \diff\Bar{z}.
\end{multline}
Now applying the fact that $\omega_\theta$ is odd in $z$, we can see that
\begin{multline}
    \int_{-\infty}^{0}
    \int_0^{\infty} 
    \int_0^1
    \left(\frac{1}{\left((z-\Bar{z})^2+r^2+\Bar{r}^2
    -2r\Bar{r}s\right)^\frac{3}{2}}
    -\frac{1}{\left((z-\Bar{z})^2+r^2+\Bar{r}^2
    +2r\Bar{r}s\right)^\frac{3}{2}}\right) \\
    \Bar{r}(z-\Bar{z})
    s\left(1-s^2\right)^{-\frac{1}{2}}
    \omega_{\theta}(\Bar{r},\Bar{z})
    \diff s \diff\Bar{r} \diff\Bar{z} \\
    = -\int_0^\infty
    \int_0^\infty \int_0^1
    \left(\frac{1}{\left((z+\Bar{z})^2+r^2+\Bar{r}^2
    -2r\Bar{r}s\right)^\frac{3}{2}}
    -\frac{1}{\left((z+\Bar{z})^2+r^2+\Bar{r}^2
    +2r\Bar{r}s\right)^\frac{3}{2}}\right) \\
    \Bar{r}(z+\Bar{z})
    s\left(1-s^2\right)^{-\frac{1}{2}}
    \omega_{\theta}(\Bar{r},\Bar{z})
    \diff s \diff\Bar{r} \diff\Bar{z}.
\end{multline}
Plugging this back in we can conclude that
\begin{multline} \label{RadialOddID}
    u_r(r,z)
    =
    \frac{1}{2\pi} \int_0^\infty \int_0^\infty
    \int_0^1
    \Bigg(
    \frac{\Bar{r}(z-\Bar{z})
    s\left(1-s^2\right)^{-\frac{1}{2}}}{\left((z-\Bar{z})^2+r^2+\Bar{r}^2
    -2r\Bar{r}s\right)^\frac{3}{2}}
    -\frac{\Bar{r}(z-\Bar{z})
    s\left(1-s^2\right)^{-\frac{1}{2}}}{\left((z-\Bar{z})^2+r^2+\Bar{r}^2
    +2r\Bar{r}s\right)^\frac{3}{2}}
    \\
    -\frac{\Bar{r}(z+\Bar{z})
    s\left(1-s^2\right)^{-\frac{1}{2}}}{\left((z+\Bar{z})^2+r^2+\Bar{r}^2
    -2r\Bar{r}s\right)^\frac{3}{2}}
    +\frac{\Bar{r}(z+\Bar{z})
    s\left(1-s^2\right)^{-\frac{1}{2}}}{\left((z+\Bar{z})^2+r^2+\Bar{r}^2
    +2r\Bar{r}s\right)^\frac{3}{2}}
    \Bigg) \omega_\theta(\bar{r},\bar{z})
    \diff s \diff\bar{r} \diff\bar{z}.
\end{multline}
It is a simple computation to show from the identity \eqref{RadialOddID}, that
\begin{equation}
    u_r(r,z)=u_r(r,-z),
\end{equation}
and this completes the proof.
\end{proof}

Now we will now derive a scalar evolution equation for axisymmetric, swirl-free solutions of the inviscid vortex stretching equation, and show that this class of solutions is preserved by the dynamics of the equation.

\begin{lemma} \label{AxisymLemma}
    Suppose $\omega \in H^s_*, s>\frac{3}{2}$ and that $u=\nabla\times(-\Delta)^{-1}\omega$.
    Then $P_{df}\left((\omega\cdot\nabla)u\right)
    \in H^s_*$ with
    \begin{equation}
        P_{df}\left((\omega\cdot\nabla)u\right)(x)
        =\frac{u_r(r,z)}{r}
        \omega_\theta(r,z).
    \end{equation}
\end{lemma}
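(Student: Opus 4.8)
The plan is to compute the vortex stretching term $(\omega\cdot\nabla)u$ explicitly in cylindrical coordinates and to observe that, for this geometry, the result is already a divergence-free field of the form $f(r,z)e_\theta$; the Helmholtz projection then acts as the identity and no correction term appears.

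First I would record the differential-geometric facts in cylindrical coordinates: $\nabla = e_r\partial_r + \frac1r e_\theta \partial_\theta + e_z\partial_z$, together with $\partial_\theta e_r = e_\theta$, $\partial_\theta e_\theta = -e_r$, and $\partial_\theta e_z = 0$. Since $\omega = \omega_\theta(r,z)e_\theta$, the directional derivative collapses to $\omega\cdot\nabla = \frac{\omega_\theta}{r}\partial_\theta$. Because $u$ is axisymmetric and swirl-free (Proposition \ref{BiotSavart}), $u = u_r(r,z)e_r + u_z(r,z)e_z$ with coefficients independent of $\theta$; hence $\partial_\theta u = u_r\,\partial_\theta e_r + u_z\,\partial_\theta e_z = u_r e_\theta$. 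Combining these gives $(\omega\cdot\nabla)u = \frac{u_r\omega_\theta}{r}e_\theta$, which is manifestly of the required axisymmetric, swirl-free form with $\theta$-component $\frac{u_r\omega_\theta}{r}$.

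Next I would show that this field is already solenoidal, so $P_{df}$ leaves it unchanged. Writing $e_\theta = \frac1r(-x_2, x_1, 0)$, any field of the form $g(r,z)(-x_2,x_1,0)$ has Cartesian divergence $(-x_2\partial_1 + x_1\partial_2)g = \partial_\theta g = 0$ whenever $g$ depends only on $(r,z)$; equivalently $\nabla\cdot(f e_\theta) = \frac1r\partial_\theta f = 0$. Thus $(\omega\cdot\nabla)u \in H^s_{df}$, i.e. $\xi\cdot\widehat{(\omega\cdot\nabla)u}(\xi)=0$. By the uniqueness of the Helmholtz decomposition in Lemma \ref{HelmholtzDecomp}, the projection $P_{df}$ is the identity on $H^s_{df}$, so $P_{df}((\omega\cdot\nabla)u) = (\omega\cdot\nabla)u = \frac{u_r\omega_\theta}{r}e_\theta \in H^s_*$, which is the claim.

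The main obstacle is not the algebra but the regularity bookkeeping: one must check that $\frac{u_r\omega_\theta}{r}e_\theta$ genuinely lies in $H^s$ and that its distributional divergence carries no singular contribution concentrated on the axis $r=0$. The first point relies on the fact that for an axisymmetric, swirl-free velocity $u_r$ vanishes on the axis, so that $u_r/r$ is bounded and as regular as the structure allows; together with $\omega_\theta \in H^s_*$ and the algebra property of $H^s$ (Lemma \ref{HilbertAlgebra}) this should place the product in $H^s$. The second point is handled by the same vanishing, which removes any delta-type divergence along $r=0$, so that the pointwise identity $\nabla\cdot(f e_\theta)=0$ upgrades to an identity of distributions. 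I expect these verifications to be routine once the vanishing of $u_r$ on the axis is invoked.
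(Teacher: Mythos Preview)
Your proposal is correct and follows essentially the same route as the paper: compute $(\omega\cdot\nabla)u$ in cylindrical coordinates using $\omega\cdot\nabla=\frac{\omega_\theta}{r}\partial_\theta$ and $\partial_\theta e_r=e_\theta$, then observe the result is already divergence free so $P_{df}$ acts as the identity. You are in fact more careful than the paper, which simply asserts divergence-freeness and omits the regularity discussion near the axis.
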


\begin{proof}
We know that
\begin{align}
    \omega(x)&=\omega_\theta(r,z)e_\theta \\
    u(x)&= u_r(r,z)e_r +u_z(r,z)e_z.
\end{align}
This implies that
\begin{equation}
    \omega\cdot\nabla
    =
    \omega_\theta \frac{1}{r}\partial_\theta.
\end{equation}
Applying the identity
\begin{equation}
    \partial_\theta e_r=e_\theta,
\end{equation}
we can see that
\begin{equation}
    (\omega\cdot\nabla)u=
    \omega_\theta \frac{u_r}{r} e_\theta.
\end{equation}
This vector field is already divergence free, so this completes the proof.
\end{proof}

\begin{theorem} \label{AxisymThm}
Suppose $\omega^0\in H^s_*, s>\frac{3}{2}$ is the vorticity for an axisymmetric, swirl free vector field. Then the solution of the inviscid vortex stretching equation for this initial data
$\omega \in C\left([0,T_{max});H^s_*\right)$
is also the vorticity for an axisymmetric, swirl-free vector field. Furthermore, for all $0<t<T_{max}$, the vorticity satisfies the evolution equation
\begin{equation}
    \partial_t \omega_\theta
    =
    \frac{u_r}{r}\omega_{\theta}.
\end{equation}
Therefore, for all $0<t<T_{max}$,
and for all $(r,z)\in \Pi$,
\begin{equation} \label{ExpID}
    \omega_\theta(r,z,t)
    =
    \omega^0_\theta(r,z)
    \exp\left(\int_0^t\frac{u_r(r,z,\tau)}{r}
    \diff\tau\right).
\end{equation}
Note this implies the support of $\omega_\theta$ is preserved by the dynamics of the equation,
\begin{equation}
    \Omega(t)=\Omega^0.
\end{equation}
\end{theorem}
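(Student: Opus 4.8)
The plan is to first show that the class $H^s_*$ is preserved by the flow, then read off the scalar evolution equation, and finally integrate it in time at each fixed spatial point.

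For the invariance I would revisit the Picard iteration from the proof of Theorem \ref{LocalWP}. The space $H^s_*$ is a closed subspace of $H^s_{df}$, being cut out by the linear conditions that the only nonvanishing component is $\omega_\theta$ and that $\omega_\theta$ depends only on $(r,z)$; consequently $C([0,T];H^s_*)$ is closed in $C([0,T];H^s_{df})$, and $Y:=X\cap C([0,T];H^s_*)$ is a closed, hence complete, subset of $X$. The key point is that $Q$ maps $Y$ into itself: if $w\in H^s_*$ then Lemma \ref{AxisymLemma} gives $B(w,w)=P_{df}((w\cdot\nabla)v)=\frac{v_r}{r}w_\theta\,e_\theta\in H^s_*$, and since $\omega^0\in H^s_*$ and $H^s_*$ is closed under time integration, $Q[w]\in Y$ whenever $w\in Y$. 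The contraction estimate already established in Theorem \ref{LocalWP} then produces a fixed point inside $Y$, and by uniqueness of the fixed point in $X$ this is the solution, so $\omega\in C([0,T];H^s_*)$. Iterating exactly as in Corollary \ref{BlowupTimeBound} extends the conclusion to all of $[0,T_{max})$.

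With invariance in hand the scalar equation is immediate: writing $\omega=\omega_\theta e_\theta$ and using that $e_\theta$ is independent of $t$, the evolution equation $\partial_t\omega=B(\omega,\omega)=P_{df}((\omega\cdot\nabla)u)$ together with Lemma \ref{AxisymLemma} yields $(\partial_t\omega_\theta)e_\theta=\frac{u_r}{r}\omega_\theta\,e_\theta$, i.e. $\partial_t\omega_\theta=\frac{u_r}{r}\omega_\theta$ as an identity in $H^s$ for each $t$. I would then freeze $(r,z)$ and regard this as the linear scalar ODE $f'(t)=c(t)f(t)$ with $f(t)=\omega_\theta(r,z,t)$ and $c(t)=\frac{u_r(r,z,t)}{r}$. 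Since $\omega\in C^1([0,T_{max});H^s_*)$ with $H^s\hookrightarrow C^0$ for $s>\frac32$, the Biot--Savart law (Proposition \ref{BiotSavart}) shows $u_r$ is continuous in $(r,z,t)$, and because $u\in H^{s+1}$ is $C^1$ in space with $u_r$ vanishing on the axis, $\frac{u_r}{r}$ extends to a bounded continuous coefficient with $t\mapsto c(t)$ continuous on each $[0,t]\subset[0,T_{max})$. Solving the ODE gives $\omega_\theta(r,z,t)=\omega^0_\theta(r,z)\exp\!\big(\int_0^t\frac{u_r(r,z,\tau)}{r}\diff\tau\big)$, which is precisely \eqref{ExpID}; since the exponential factor is finite and strictly positive, $\omega_\theta(r,z,t)=0$ if and only if $\omega^0_\theta(r,z)=0$, so $\Omega(t)=\Omega^0$.

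The main obstacle is the regularity of the coefficient $\frac{u_r}{r}$, especially near the axis $r=0$: one must confirm that it is bounded and continuous, so that the time integral is finite and the ODE solution is genuine, which is where the extra smoothness $u\in H^{s+1}$ and the vanishing of $u_r$ on the axis enter. Everything else — the subspace invariance and the integration of the ODE — is routine once Lemma \ref{AxisymLemma} is available.
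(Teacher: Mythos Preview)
Your proposal is correct and follows essentially the same route as the paper: use Lemma \ref{AxisymLemma} to show each Picard iterate stays in $H^s_*$, pass to the limit to get invariance, read off the scalar equation $\partial_t\omega_\theta=\frac{u_r}{r}\omega_\theta$, and integrate it pointwise. You are somewhat more careful than the paper about the closedness of $H^s_*$ and the regularity of $\frac{u_r}{r}$ near the axis, but the structure of the argument is identical.
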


\begin{proof}
Applying Lemma \ref{AxisymLemma}, if $\omega(\cdot,t)\in H^s_*$, then $\partial_t\omega(\cdot,t)\in H^s_*$ with
\begin{equation}
    \partial_t\omega=
    \frac{u_r}{r}\omega_\theta e_\theta.
\end{equation}
This implies that the class of axisymmetric, swirl-free flows are preserved by the inviscid vortex stretching equation. Note that this is not only formal because each of the Picard iterates in Theorem \ref{LocalWP} must also be in $C_TH^s_*$, and so the solution, which is the limit of these iterates, must also be in $C_T H^s_*$.
It then immediately follows that
for all $0\leq t<T_{max}$,
\begin{equation}
    \partial_t\omega_\theta
    =
    \frac{u_r}{r}\omega_\theta.
\end{equation}
Integrating this differential equation immediately yields the formula \eqref{ExpID},
and this completes the proof.
\end{proof}

\begin{remark}
This result shows that axisymmetric, swirl-free initial data leads to axisymmetric, swirl-free solutions for the inviscid vortex stretching equation just as it does for the Euler equation.
For axisymmetric, swirl free solutions of the Euler equation, the dynamics can be expressed entirely in terms of $\omega_\theta(r,z,t)$, which has an evolution equation is given by
\begin{equation}
    \partial_t\omega_\theta
    +(u\cdot\nabla)\omega_{\theta}
    -\frac{u_r}{r} \omega_\theta=0.
\end{equation}
There is also a ``divergence form'' expression of this equation of the form
\begin{equation}
    \partial_t\omega_\theta
    +\partial_r(u_r\omega_\theta)
    +\partial_z(u_z\omega_\theta)
    =0.
\end{equation}
The quantity $\frac{\omega_\theta}{r}$ is transported by the flow, with
\begin{equation}
    (\partial_t+u\cdot\nabla)
    \frac{\omega_\theta}{r}
    =0.
\end{equation}
It is this fact that leads both to global regularity when the vorticity is smooth \cite{Yudovich} and to finite-time blowup for certain nonsmooth, H\"older continuous vorticities \cite{ElgindiBlowup}.
\end{remark}

Before proving finite-time blowup, it is necessary to prove some properties of axisymmetric, swirl free solutions of the Euler equation with a particular geometry, that of colliding vortex rings.

\begin{lemma} \label{OddLemma}
Suppose $\omega\in C^1\left([0,T_{max});H^s_*\right), s>\frac{3}{2}$ is an axisymmetric, swirl free solution of the inviscid vortex stretching equation with initial data $\omega^0\in H^s_*$ satisfying $\omega^0_{\theta}(r,z)$ is odd in $z$, and for all $r,z \geq 0$,
\begin{equation}
    \omega_\theta(r,z)\leq 0.
\end{equation}
Then for all $0<t<T_{max}$,
$\omega_\theta(r,z,t)$ is odd in $z$
and for all $0<t<T_{max}, r,z \geq 0$,
\begin{equation} \label{SignCondition}
    \omega_\theta(r,z,t)\leq 0.
\end{equation}
\end{lemma}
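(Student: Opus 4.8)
The plan is to establish the two conclusions---oddness in $z$ and the sign condition \eqref{SignCondition}---separately, since the sign is essentially immediate once oddness is in hand (indeed, independent of it), while the oddness carries the real content.

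For the sign condition I would simply invoke the exponential representation \eqref{ExpID} from Theorem \ref{AxisymThm},
\begin{equation*}
    \omega_\theta(r,z,t)
    =
    \omega^0_\theta(r,z)
    \exp\left(\int_0^t\frac{u_r(r,z,\tau)}{r}\diff\tau\right).
\end{equation*}
Because the exponential factor is strictly positive, the sign of $\omega_\theta(r,z,t)$ at every point agrees with that of $\omega^0_\theta(r,z)$. Restricting to $r,z\geq 0$, where the hypothesis gives $\omega^0_\theta(r,z)\leq 0$, yields $\omega_\theta(r,z,t)\leq 0$ directly. This step uses only the pointwise sign of the initial data, so it does not depend on first knowing oddness.

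For the oddness, the key structural input is Proposition \ref{BiotSavartOdd}: whenever $\omega_\theta(\cdot,t)$ is odd in $z$, the induced radial velocity $u_r(\cdot,t)$ is even in $z$. The obstacle is that this is precisely the relationship one wants to feed into \eqref{ExpID}, so a naive substitution is circular---the evenness of $u_r$ presupposes the oddness of $\omega_\theta$ at the \emph{same} time. I would break the circularity exactly as in the proof of Theorem \ref{AxisymThm}, by showing oddness in $z$ is a closed linear constraint preserved by the Picard iteration producing the solution. Concretely, if $w\in C_T H^s_*$ has $w_\theta$ odd in $z$, then by Proposition \ref{BiotSavartOdd} its radial velocity $v_r$ is even in $z$, and by Lemma \ref{AxisymLemma} the angular component of $B(w,w)$ equals $\frac{v_r}{r}w_\theta$---a product of an even and an odd function of $z$, hence odd in $z$. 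Thus the iteration map $Q[w]=\omega^0+\int_0^t B(w,w)\diff\tau$ preserves the closed subspace of paths with $\omega_\theta$ odd in $z$; since $\omega^0_\theta$ is odd by hypothesis, every iterate lies in this subspace, and so does the limiting solution, giving oddness for all $0<t<T_{max}$.

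The step I expect to be the main obstacle is making this preservation rigorous rather than merely formal: one must confirm the closed-subspace argument is compatible with the contraction setup of Theorem \ref{LocalWP} and with the restart-by-restart extension in Corollary \ref{BlowupTimeBound}, so that oddness survives each reinitialization at $\omega^k=\omega\!\left(\cdot,\sum_j T_j\right)$. An alternative that avoids the iteration altogether is a uniqueness argument: using the Biot--Savart formula of Proposition \ref{BiotSavart}, one checks that the reflected field $\tilde\omega_\theta(r,z,t):=-\omega_\theta(r,-z,t)$ has radial velocity $\tilde u_r(r,z,t)=u_r(r,-z,t)$, so that $\tilde\omega_\theta$ solves the same scalar evolution equation with the same (odd) initial data; uniqueness from Theorem \ref{LocalWP} then forces $\tilde\omega_\theta=\omega_\theta$, which is exactly oddness in $z$. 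Either route reduces the lemma to the Biot--Savart symmetry together with the well-posedness theory of Section \ref{SectionLocalWP}.
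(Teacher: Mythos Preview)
Your proposal is correct and follows essentially the same approach as the paper: the sign condition is read off directly from the exponential identity \eqref{ExpID}, and oddness in $z$ is shown to be preserved by the Picard iterates via Proposition \ref{BiotSavartOdd} and Lemma \ref{AxisymLemma}, hence by their limit. Your additional remarks on the restart-by-restart extension and the alternative reflection-plus-uniqueness argument go somewhat beyond the paper's brief proof, but the core route is the same.
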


\begin{proof}
The sign condition in \eqref{SignCondition} follows immediately from the identity \eqref{ExpID}.
Furthermore, we know from Proposition \ref{BiotSavartOdd}, that if $\omega_\theta(\cdot,t)$ is odd in $z$, then
$\frac{u_r}{r}(\cdot,t)$ is even in $z$.
This implies that $\frac{u_r}{r}\omega_\theta(\cdot,t)$ 
is odd in $z$, and so 
$\partial_t\omega_\theta(\cdot,t)$
is odd in $z$.
This implies that oddness in $z$ is preserved by the dynamics, because each of the Picard iterates in Theorem \ref{LocalWP} will be odd in $z$, and so the solution, which is the limit of the Picard iterates, must be as well.
This completes the proof.
\end{proof}

\begin{proposition} \label{DerivativeProp}
    Suppose $\omega \in C\left([0,T_{max};H^s_*\right),
    s>\frac{3}{2}$
    is a solution of the inviscid vortex stretching equation and that $\omega_\theta(r,z,t)$ is odd in $z$.
    Then for all $0<t<T_{max}$
    \begin{multline} \label{GrowthA}
        \frac{\diff}{\diff t}
        \left(-\int_0^\infty \int_0^\infty
        r^2 \omega_\theta(r,z,t)\diff r \diff z\right)
        = 
    \frac{1}{2\pi} 
    \int_0^\infty \int_0^\infty
    \int_0^\infty \int_0^\infty 
    \int_0^1
    r\bar{r}\omega_\theta(r,z,t)
    \omega_\theta(\bar{r},\bar{z},t)
    \\
    \left(\frac{(z+\Bar{z})
    s\left(1-s^2\right)^{-\frac{1}{2}}}
    {\left((z+\Bar{z})^2+r^2+\Bar{r}^2
    -2r\Bar{r}s\right)^\frac{3}{2}}
    -\frac{(z+\Bar{z})
    s\left(1-s^2\right)^{-\frac{1}{2}}}
    {\left((z+\Bar{z})^2+r^2+\Bar{r}^2
    +2r\Bar{r}s\right)^\frac{3}{2}}
    \right)
    \diff s \diff\bar{r} \diff\bar{z} 
    \diff r \diff z.
    \end{multline}
\end{proposition}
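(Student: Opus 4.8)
The plan is to differentiate the functional
\begin{equation}
Q(t)=-\int_0^\infty\int_0^\infty r^2\omega_\theta(r,z,t)\diff r\diff z
\end{equation}
directly, substitute the scalar evolution equation established in Theorem \ref{AxisymThm}, then insert the Biot--Savart representation of Proposition \ref{BiotSavartOdd}, and finally exploit a swap symmetry in the resulting five-fold integral to discard the terms that do not appear in \eqref{GrowthA}. First I would differentiate under the integral sign and use $\partial_t\omega_\theta=\tfrac{u_r}{r}\omega_\theta$, which cancels one power of $r$ and gives
\begin{equation}
\frac{\diff}{\diff t}\left(-\int_0^\infty\int_0^\infty r^2\omega_\theta\diff r\diff z\right)
=-\int_0^\infty\int_0^\infty r^2\,\partial_t\omega_\theta\diff r\diff z
=-\int_0^\infty\int_0^\infty r\,u_r\,\omega_\theta\diff r\diff z.
\end{equation}

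Next, because $\omega_\theta(\cdot,t)$ is odd in $z$, I would substitute the explicit formula for $u_r(r,z,t)$ from Proposition \ref{BiotSavartOdd}. This produces a five-fold integral in the variables $(r,z,\bar r,\bar z,s)$ over $[0,\infty)^2\times[0,\infty)^2\times[0,1]$, whose integrand is the symmetric prefactor $r\bar r\,\omega_\theta(r,z,t)\,\omega_\theta(\bar r,\bar z,t)$ multiplied by the kernel coming from Proposition \ref{BiotSavartOdd}. That kernel splits naturally into two terms carrying the factor $(z-\bar z)$ and two terms carrying the factor $(z+\bar z)$, the latter with denominators of the form $(z+\bar z)^2+r^2+\bar r^2\mp 2r\bar r s$.

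The key step is then a symmetry argument under the interchange $(r,z)\leftrightarrow(\bar r,\bar z)$. The integration domain and the prefactor $r\bar r\,\omega_\theta(r,z,t)\,\omega_\theta(\bar r,\bar z,t)$ are invariant under this swap, and all the denominators, depending only on $(z\mp\bar z)^2$, $r^2+\bar r^2$, and $2r\bar r s$, are symmetric as well. Hence the two terms carrying $(z-\bar z)$ are odd under the swap and integrate to zero, while the two terms carrying $(z+\bar z)$ are even and survive. Collecting these surviving terms and tracking the overall minus sign (which flips the bracket so the $-2r\bar r s$ denominator enters with a plus sign) reproduces \eqref{GrowthA} exactly.

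The main obstacle is analytic rather than algebraic: I must justify differentiating under the integral sign and applying Fubini to realize the swap symmetry on the five-fold integral. Differentiability and finiteness of $Q(t)$ follow from $\omega\in C^1$ in time together with the fact that the support $\Omega(t)=\Omega^0$ is preserved by Theorem \ref{AxisymThm} (compactly supported in the intended application). The five-fold integrand has an integrable singularity concentrated at $z=\bar z=0$, $r=\bar r$, $s\to 1$, where the denominator degenerates while the weight $(1-s^2)^{-1/2}$ blows up; this is precisely the same integrable singularity already controlled in establishing the Biot--Savart formula of Proposition \ref{BiotSavart}, so the absolute integrability needed for Fubini, and hence for the cancellation of the $(z-\bar z)$ terms, can be borrowed from there.
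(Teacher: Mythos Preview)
Your proposal is correct and follows essentially the same route as the paper: differentiate, use $\partial_t\omega_\theta=\tfrac{u_r}{r}\omega_\theta$, substitute the formula from Proposition~\ref{BiotSavartOdd}, and then exploit the $(r,z)\leftrightarrow(\bar r,\bar z)$ symmetry to kill the $(z-\bar z)$ terms. The only cosmetic difference is that the paper writes out the swapped integral explicitly and takes one half the sum of the two expressions, whereas you argue directly that the $(z-\bar z)$ pieces are odd under the swap; these are the same argument. Your added paragraph on justifying differentiation under the integral and Fubini is a reasonable supplement the paper leaves implicit, though note that the $(z-\bar z)$ kernel is singular along the full diagonal $z=\bar z$, $r=\bar r$, $s\to 1$, not only at $z=\bar z=0$.
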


\begin{proof}
Applying Lemma \ref{AxisymLemma}, we can see that
\begin{equation}
    \frac{\diff}{\diff t}
    \int_0^\infty \int_0^\infty
    r^2 \omega_\theta(r,z,t)\diff r \diff z
    = 
    \frac{\diff}{\diff t}
    \int_0^\infty \int_0^\infty
    r u_r(r,z,t) \omega_\theta(r,z,t)\diff r \diff z.
\end{equation}
Plugging into the our identity for $u_r$ from Proposition \ref{BiotSavartOdd}, we find that
\begin{multline} \label{StepA}
    \frac{\diff}{\diff t}
    \int_0^\infty \int_0^\infty
    r^2 \omega_\theta(r,z,t)\diff r \diff z
    =
    \frac{1}{2\pi} 
    \int_0^\infty \int_0^\infty
    \int_0^\infty \int_0^\infty 
    \int_0^1
    r\bar{r}\omega_\theta(r,z,t)
    \omega_\theta(\bar{r},\bar{z},t)
    \\
    \Bigg(
    \frac{(z-\Bar{z})
    s\left(1-s^2\right)^{-\frac{1}{2}}}
    {\left((z-\Bar{z})^2+r^2+\Bar{r}^2
    -2r\Bar{r}s\right)^\frac{3}{2}}
    -\frac{(z-\Bar{z})
    s\left(1-s^2\right)^{-\frac{1}{2}}}
    {\left((z-\Bar{z})^2+r^2+\Bar{r}^2
    +2r\Bar{r}s\right)^\frac{3}{2}}
    \\
    -\frac{(z+\Bar{z})
    s\left(1-s^2\right)^{-\frac{1}{2}}}
    {\left((z+\Bar{z})^2+r^2+\Bar{r}^2
    -2r\Bar{r}s\right)^\frac{3}{2}}
    +\frac{(z+\Bar{z})
    s\left(1-s^2\right)^{-\frac{1}{2}}}
    {\left((z+\Bar{z})^2+r^2+\Bar{r}^2
    +2r\Bar{r}s\right)^\frac{3}{2}}
    \Bigg)
    \diff s \diff\bar{r} \diff\bar{z} 
    \diff r \diff z.
\end{multline}
Observe that the measure
$\bar{r}r\omega_\theta(r,z,t)
\omega_\theta(\bar{r},\bar{z},t)
\diff\bar{r}\diff\bar{z} \diff r \diff z$
is invariant with respect to the interchanging of $(r,z)$ and $\bar{r},\bar{z}$, and so
\begin{multline} \label{StepB}
    \frac{\diff}{\diff t}
    \int_0^\infty \int_0^\infty
    r^2 \omega_\theta(r,z,t)\diff r \diff z
    =
    \frac{1}{2\pi} 
    \int_0^\infty \int_0^\infty
    \int_0^\infty \int_0^\infty 
    \int_0^1
    r\bar{r}\omega_\theta(r,z,t)
    \omega_\theta(\bar{r},\bar{z},t)
    \\
    \Bigg(
    \frac{(\Bar{z}-z)
    s\left(1-s^2\right)^{-\frac{1}{2}}}
    {\left((z-\Bar{z})^2+r^2+\Bar{r}^2
    -2r\Bar{r}s\right)^\frac{3}{2}}
    -\frac{(\Bar{z}-z)
    s\left(1-s^2\right)^{-\frac{1}{2}}}
    {\left((z-\Bar{z})^2+r^2+\Bar{r}^2
    +2r\Bar{r}s\right)^\frac{3}{2}}
    \\
    -\frac{(z+\Bar{z})
    s\left(1-s^2\right)^{-\frac{1}{2}}}
    {\left((z+\Bar{z})^2+r^2+\Bar{r}^2
    -2r\Bar{r}s\right)^\frac{3}{2}}
    +\frac{(z+\Bar{z})
    s\left(1-s^2\right)^{-\frac{1}{2}}}
    {\left((z+\Bar{z})^2+r^2+\Bar{r}^2
    +2r\Bar{r}s\right)^\frac{3}{2}}
    \Bigg)
    \diff s \diff\bar{r} \diff\bar{z} 
    \diff r \diff z.
\end{multline}
Taking one half the sum of \eqref{StepA} and \eqref{StepB}, we find that
\begin{multline} 
    \frac{\diff}{\diff t}
    \int_0^\infty \int_0^\infty
    r^2 \omega_\theta(r,z,t)\diff r \diff z
    =
    \frac{1}{2\pi} 
    \int_0^\infty \int_0^\infty
    \int_0^\infty \int_0^\infty 
    \int_0^1
    r\bar{r}\omega_\theta(r,z,t)
    \omega_\theta(\bar{r},\bar{z},t)
    \\
    \left(-\frac{(z+\Bar{z})
    s\left(1-s^2\right)^{-\frac{1}{2}}}
    {\left((z+\Bar{z})^2+r^2+\Bar{r}^2
    -2r\Bar{r}s\right)^\frac{3}{2}}
    +\frac{(z+\Bar{z})
    s\left(1-s^2\right)^{-\frac{1}{2}}}
    {\left((z+\Bar{z})^2+r^2+\Bar{r}^2
    +2r\Bar{r}s\right)^\frac{3}{2}}
    \right)
    \diff s \diff\bar{r} \diff\bar{z} 
    \diff r \diff z.
\end{multline}
Distributing through the negative sign, this completes the proof.
\end{proof}

\begin{lemma} \label{LowerBoundLemma}
Suppose $\Omega_+ \subset \Pi^+$ is compact and bounded away from the axis $r=0$ and the plane $z=0$.
Then we have
\begin{multline}
    \kappa:= \frac{1}{2\pi}
    \inf_{(r,z),(\bar{r},\bar{z})\in \Omega_+}
    \int_0^1 
    \frac{(z+\Bar{z})
    s\left(1-s^2\right)^{-\frac{1}{2}}}
    {r\bar{r}} \\
    \left(\frac{1}
    {\left((z+\Bar{z})^2+r^2+\Bar{r}^2
    -2r\Bar{r}s\right)^\frac{3}{2}}
    -\frac{1}
    {\left((z+\Bar{z})^2+r^2+\Bar{r}^2
    +2r\Bar{r}s\right)^\frac{3}{2}}
    \right)
    \diff s
    >
    0.
\end{multline}
\end{lemma}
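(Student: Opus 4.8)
The plan is to write $\kappa = \frac{1}{2\pi}\inf_{(r,z),(\bar{r},\bar{z})\in\Omega_+} G(r,z,\bar{r},\bar{z})$, where $G$ denotes the inner integral over $s$, and to prove strict positivity of this infimum in two steps: first I would establish that $G$ is strictly positive at every pair of points in $\Omega_+$, and then that $G$ is continuous on the compact set $\Omega_+\times\Omega_+$. Since an everywhere-positive continuous function on a compact set attains a positive minimum by the extreme value theorem, this immediately yields $\kappa>0$.

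For pointwise positivity I would fix $(r,z),(\bar{r},\bar{z})\in\Omega_+$ and abbreviate $A=(z+\bar{z})^2+r^2+\bar{r}^2$. Because $\Omega_+$ is bounded away from the axis and from the plane $z=0$, we have $r,\bar{r}>0$ and $z,\bar{z}>0$, so the prefactor $(z+\bar{z})/(r\bar{r})$ and the factor $s(1-s^2)^{-1/2}$ are positive for $s\in(0,1)$. For the bracketed term I would observe that for $s\in[0,1]$
\[
A-2r\bar{r}s \;\geq\; A-2r\bar{r} \;=\; (z+\bar{z})^2+(r-\bar{r})^2 \;\geq\; (z+\bar{z})^2 \;>\;0,
\]
so both denominators are strictly positive; since $A-2r\bar{r}s<A+2r\bar{r}s$ for $s\in(0,1]$ and $t\mapsto t^{-3/2}$ is strictly decreasing, the bracket is strictly positive on $(0,1]$, and hence $G(r,z,\bar{r},\bar{z})>0$.

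The remaining and most delicate step is continuity, since $(1-s^2)^{-1/2}$ is singular as $s\to 1$, and this is precisely where the hypotheses on $\Omega_+$ enter. Using compactness together with the separation from the axis and the plane, I would record uniform bounds $r,\bar{r}\geq\delta>0$, $z,\bar{z}\geq\eta>0$, and $z+\bar{z}\leq M$. The displayed lower bound then gives $A-2r\bar{r}s\geq 4\eta^2$ uniformly in the parameters, so the bracket is bounded by $(4\eta^2)^{-3/2}$ and the prefactor by $M/\delta^2$; consequently the integrand is dominated, uniformly over $\Omega_+\times\Omega_+$, by $C(1-s^2)^{-1/2}$ for a fixed constant $C$, and this majorant is integrable on $[0,1]$. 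As the integrand is continuous in $(r,z,\bar{r},\bar{z})$ for each fixed $s\in(0,1)$, the dominated convergence theorem yields continuity of $G$ on $\Omega_+\times\Omega_+$, and compactness then closes the argument.

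I expect the main obstacle to be exactly the justification of continuity across the integrable singularity at $s=1$. The crucial observation is that the separation of $\Omega_+$ from the plane $z=0$ keeps $z+\bar{z}$ bounded below, which prevents the denominators from degenerating even in the worst case $r=\bar{r}$, $s=1$; this is what furnishes the uniform integrable majorant and lets dominated convergence apply. Without the hypotheses that $\Omega_+$ is compact and bounded away from both the axis and the plane, the infimum could genuinely fail to be positive.
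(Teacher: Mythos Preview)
Your proof is correct and follows the same overall strategy as the paper: pointwise positivity of the integral, continuity on the compact set $\Omega_+\times\Omega_+$, and the extreme value theorem. The only difference is in how the singularity at $s=1$ is handled. You dominate the integrand uniformly by $C(1-s^2)^{-1/2}$ and invoke dominated convergence to get continuity of the full integral $\int_0^1$. The paper instead exploits positivity of the integrand to bound the full integral from below by the truncated integral $\int_{1/3}^{2/3}$, on which the integrand is uniformly continuous and no limit theorem is needed. Your route is more direct and makes transparent exactly where each hypothesis on $\Omega_+$ is used (compactness for the upper bound $M$, separation from the axis for the lower bound $\delta$, separation from the plane $z=0$ for the crucial lower bound $4\eta^2$ on the denominator); the paper's truncation trick is slightly more elementary since it sidesteps the integrable singularity altogether.
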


\begin{proof}
First we will introduce a function for the integrand, letting
\begin{multline}
    g(r,z,\bar{r},\bar{z},s)
    =
    \frac{(z+\Bar{z})
    s\left(1-s^2\right)^{-\frac{1}{2}}}
    {r\bar{r}} \\
    \left(\frac{1}
    {\left((z+\Bar{z})^2+r^2+\Bar{r}^2
    -2r\Bar{r}s\right)^\frac{3}{2}}
    -\frac{1}
    {\left((z+\Bar{z})^2+r^2+\Bar{r}^2
    +2r\Bar{r}s\right)^\frac{3}{2}}
    \right).
\end{multline}
We can see that this integrand is strictly positive; for all 
$(r,z),(\bar{r},\bar{z})\in \Omega_+$
and for all $0<s<1$,
\begin{equation} \label{gPositive}
    g(r,z,\bar{r},\bar{z},s)>0.
\end{equation}
This implies that for all 
$(r,z),(\bar{r},\bar{z})\in \Omega_+$,
\begin{equation}
    \int_0^1 g(r,z,\bar{r},\bar{z},s) \diff s
    >
    \int_\frac{1}{3}^\frac{2}{3}
    g(r,z,\bar{r},\bar{z},s) \diff s,
\end{equation}
and so it suffices to show that
\begin{equation}
    \inf_{(r,z),(\bar{r},\bar{z})\in \Omega_+}
    \int_\frac{1}{3}^\frac{2}{3}
    g(r,z,\bar{r},\bar{z},s) \diff s
    >
    0.
\end{equation}
Let
\begin{equation}
    G(r,z,\bar{r},\bar{z})
    =
    \int_\frac{1}{3}^\frac{2}{3}
    g(r,z,\bar{r},\bar{z},s) \diff s.
\end{equation}
The positivity of $g$ in \eqref{gPositive} implies that for all 
$(r,z),(\bar{r},\bar{z})\in\Omega_+$,
\begin{equation}
    G(r,z,\bar{r},\bar{z})>0.
\end{equation}

It is also simple to check that $g$ is a uniformly continuous function on 
$\Omega_+ \times \Omega_+ \times 
\left[\frac{1}{3},\frac{2}{3}\right]$,
and therefore $G$ is a uniformly continuous function on $\Omega_+\times \Omega_+$. A continuous function must attain its infimum on a compact set, so this implies that
\begin{equation}
    \inf_{(r,z),(\bar{r},\bar{z})\in \Omega_+}
    G(r,z,\bar{r},\bar{z})>0.
\end{equation}
This completes the proof.
\end{proof}

We are now ready to establish finite-time blowup for smooth solutions of the inviscid vortex stretching equation; we will prove Theorem \ref{BlowupThmIntro}, which is restated here for the reader's convenience.

\begin{theorem} \label{BlowupThm}
Suppose $\omega^0\in H^s_*, s>\frac{3}{2}$ is not identically zero, and further that $\omega^0_{\theta}(r,z)$ is odd in $z$, compactly supported away from the axis $r=0$ and the plane $z=0$,
and for all $r,z\geq 0$,
\begin{equation}
    \omega_\theta(r,z)\leq 0.
\end{equation}
Then the solution of the inviscid vortex stretching equation with this initial data blows up in finite-time
\begin{equation}
    T_{max}\leq 
    \frac{1}{\kappa Q^0},
\end{equation}
where $\kappa>0$ is a function of $\Omega_+$ defined as in Lemma \ref{LowerBoundLemma} and
\begin{equation}
    Q^0
    =
    -\int_0^\infty\int_0^\infty
    r^2\omega^0_{\theta}(r,z)
    \diff r \diff z.
\end{equation}
\end{theorem}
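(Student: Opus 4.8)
The plan is to track the single scalar functional
\begin{equation}
    Q(t) = -\int_0^\infty\int_0^\infty r^2\omega_\theta(r,z,t)\diff r\diff z
\end{equation}
and show it obeys a Riccati differential inequality $Q'(t)\geq \kappa Q(t)^2$, which forces blowup no later than $1/(\kappa Q^0)$. First I would record that the structural hypotheses persist under the flow: by Theorem \ref{AxisymThm} the solution stays in $H^s_*$ and the support obeys $\Omega(t)=\Omega^0$, so $\Omega_+$ remains compact and bounded away from $r=0$ and $z=0$ for all time; by Lemma \ref{OddLemma} the profile $\omega_\theta(\cdot,t)$ stays odd in $z$ and satisfies $\omega_\theta\leq 0$ on $\Pi_+$. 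Consequently $Q(t)$ is non-negative and finite for every $t\in[0,T_{max})$ (compact support together with $H^s\hookrightarrow L^\infty$ for $s>\frac32$), and since $\omega^0$ is not identically zero, oddness in $z$ forces $\omega^0_\theta$ to be nonzero—hence strictly negative somewhere—on $\Omega_+$, giving $Q^0>0$.

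Next I would feed Proposition \ref{DerivativeProp} into the kernel $g$ from Lemma \ref{LowerBoundLemma}. The parenthetical factor appearing in \eqref{GrowthA} is precisely $r\bar r\,g(r,z,\bar r,\bar z,s)$, so the growth identity can be rewritten, after restricting to the preserved support, as
\begin{equation}
    Q'(t) = \int_{\Omega_+}\int_{\Omega_+} r^2\bar r^2\,\omega_\theta(r,z,t)\,\omega_\theta(\bar r,\bar z,t)\left(\frac{1}{2\pi}\int_0^1 g(r,z,\bar r,\bar z,s)\diff s\right)\diff\bar r\diff\bar z\diff r\diff z.
\end{equation}
Since both factors $\omega_\theta(r,z,t)$ and $\omega_\theta(\bar r,\bar z,t)$ are non-positive on $\Omega_+$, their product is non-negative, and the prefactor $r^2\bar r^2$ is non-negative; I may therefore replace the inner $s$-integral by its infimum $\kappa$ from Lemma \ref{LowerBoundLemma} without increasing the value. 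The remaining double integral then factorizes, yielding
\begin{equation}
    Q'(t)\geq \kappa\left(\int_{\Omega_+} r^2\omega_\theta(r,z,t)\diff r\diff z\right)\left(\int_{\Omega_+}\bar r^2\omega_\theta(\bar r,\bar z,t)\diff\bar r\diff\bar z\right) = \kappa\,Q(t)^2.
\end{equation}

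Finally I would integrate the Riccati inequality. Dividing by $Q(t)^2>0$ gives $-\frac{\diff}{\diff t}\frac{1}{Q(t)}\geq\kappa$, and integrating from $0$ to $t$ yields $\frac{1}{Q(t)}\leq\frac{1}{Q^0}-\kappa t$. If $T_{max}$ were strictly larger than $1/(\kappa Q^0)$, then $Q$ would be finite and continuous on a neighborhood of $t=1/(\kappa Q^0)$, yet this bound forces $1/Q(t)\to 0^+$, that is $Q(t)\to+\infty$, as $t\uparrow 1/(\kappa Q^0)$—a contradiction. Hence $T_{max}\leq 1/(\kappa Q^0)$.

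The genuinely delicate content is not in this assembly but in the ingredients already in hand: the exact identification of the kernel in \eqref{GrowthA} with $r\bar r\,g$, and the sign bookkeeping that makes every factor of the integrand cooperate. The one step warranting care here is justifying that $Q$ is a finite, differentiable function of $t$ so that the differential inequality is meaningful; this rests on the preservation of the compact support $\Omega^0$, the regularity $\omega\in C^1([0,T_{max});H^s_*)$, and the embedding $H^s\hookrightarrow L^\infty$ for $s>\frac32$. Everything else is the standard comparison argument for Riccati-driven finite-time blowup.
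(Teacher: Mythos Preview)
Your proposal is correct and follows essentially the same route as the paper's proof: both use Proposition~\ref{DerivativeProp} together with the kernel lower bound $\kappa$ from Lemma~\ref{LowerBoundLemma} and the sign condition $\omega_\theta(r,z)\omega_\theta(\bar r,\bar z)\geq 0$ on $\Omega_+$ to obtain the Riccati inequality $Q'\geq\kappa Q^2$, then integrate to force blowup by $1/(\kappa Q^0)$. You are slightly more explicit than the paper about the bookkeeping (persistence of support via Theorem~\ref{AxisymThm}, why $Q^0>0$, finiteness and differentiability of $Q$), but the argument is the same.
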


\begin{proof}
We will begin by letting
\begin{equation}
    Q(t)
    =
    -\int_0^\infty\int_0^\infty
    r^2\omega_{\theta}(r,z,t)
    \diff r \diff z.
\end{equation}
Applying Proposition \ref{DerivativeProp}
and Lemma \ref{LowerBoundLemma},
we can see that
\begin{align}
    \frac{\diff Q}{\diff t}
    &\geq 
    \kappa \int_0^\infty \int_0^\infty
    \int_0^\infty \int_0^\infty
    r^2\bar{r}^2
    \omega_{\theta}(r,z,t)
    \omega_{\theta}(\bar{r},\bar{z},t)
    \diff r\diff z \diff\bar{r} \diff\bar{z} \\
    &=
    \kappa\left(
    \int_0^\infty \int_0^\infty
    r^2 \omega_{\theta}(r,z,t)
    \diff r\diff z\right)^2 \\
    &=
    \kappa Q^2.
\end{align}
Note that Lemma \ref{OddLemma} guarantees that $\omega_\theta(\cdot,t)$ remains odd in $z$ and, crucially, remains non-positive on the upper half plane. This is essential because it guarantees that
\begin{equation}
    \omega_\theta(r,z)
    \omega_\theta(\bar{r},\bar{z})\geq 0.
\end{equation}

Integrating this differential inequality, we find that for all $0<t<T_{max}$
\begin{equation}
    Q(t)
    \geq 
    \frac{Q^0}{1-\kappa Q^0 t}.
\end{equation}
The assumption of compactly supported vorticity implies that
\begin{equation}
    Q(t)
    \leq 
    C\|\omega_\theta(\cdot,t)\|_{L^\infty},
\end{equation}
where $C>0$ is a constant independent of time depending only on $\Omega^0_+$.
Therefore, for all $0<t<T_{max}$,
\begin{equation}
    \|\omega(\cdot,t)\|_{L^\infty}
    \geq 
    \frac{C}
    {1-\kappa Q^0 t}.
\end{equation}
This implies that
\begin{equation}
    T_{max}\leq \frac{1}{\kappa Q^0},
\end{equation}
and this completes the proof.
\end{proof}

\begin{remark}
The functional $Q(t)$ is precisely the same used by Choi and Jeong in \cite{ChoiJeong} to show that axisymmetric, swirl-free solutions of the Euler equation with this colliding vortex ring geometry exhibit vorticity growth at infinity of the form
\begin{equation}
    \|\omega(\cdot,t)\|_{L^\infty}
    \geq 
    C(1+t)^{\frac{1}{15}-\epsilon}.
\end{equation}
For solutions of the Euler equation with the same geometric constraints as Theorem \ref{BlowupThm},
we have
    \begin{multline} \label{GrowthB}
        \frac{\diff}{\diff t}
        \left(-\int_0^\infty \int_0^\infty
        r^2 \omega_\theta(r,z,t)\diff r \diff z\right)
        = 
    \frac{1}{\pi} 
    \int_0^\infty \int_0^\infty
    \int_0^\infty \int_0^\infty 
    \int_0^1
    r\bar{r}\omega_\theta(r,z,t)
    \omega_\theta(\bar{r},\bar{z},t)
    \\
    \left(\frac{(z+\Bar{z})
    s\left(1-s^2\right)^{-\frac{1}{2}}}
    {\left((z+\Bar{z})^2+r^2+\Bar{r}^2
    -2r\Bar{r}s\right)^\frac{3}{2}}
    -\frac{(z+\Bar{z})
    s\left(1-s^2\right)^{-\frac{1}{2}}}
    {\left((z+\Bar{z})^2+r^2+\Bar{r}^2
    +2r\Bar{r}s\right)^\frac{3}{2}}
    \right)
    \diff s \diff\bar{r} \diff\bar{z} 
    \diff r \diff z.
    \end{multline}
This is the same exact equation for the growth of the functional $Q$, except that it is larger by a factor of $2$ for the actual Euler equation.
This difference of a factor of $2$ is due to an integration by parts involving the derivative in the radial direction.

The reason that there is not blowup in finite-time for the full axisymmetric, swirl-free Euler equation, is that the advection works to deplete the nonlinearity.
For the axisymmetric, swirl-free solutions of the inviscid vortex stretching equation, the support of the vorticity is preserved by the dynamics. For the full Euler equation, the flow is hyperbolic, compressing in $z$ and expanding in $r$. This introduces an anisotropy, with the support of the vorticity concentrating near the plane $z=0$ and getting stretched out in the radial direction.
This can prevent finite-time blowup by depleting the nonlinearity, because a key step to finite-time blowup for the inviscid vortex stretching equation, Lemma \ref{LowerBoundLemma}, relies on the support of the vorticity being uniformly bounded and uniformly bounded away from the axis $r=0$ and the plane $z=0$. For axisymmetric, swirl-free solutions of the Euler equation, even if we start with a vorticity that is bounded and bounded away from $r=0$ and $z=0$, the support of the vorticity will be made more and more anisotropic by the advection, so that 
\begin{equation}
    \liminf_{t\to +\infty} \kappa(t)=0.
\end{equation}
In fact we will prove something stronger.
\end{remark}

\begin{theorem} \label{AnisoThm}
Suppose $u\in C\left([0,+\infty);
H^s_{df}(\mathbb{R}^3)\right), s>\frac{5}{2}$
is an axisymmetric, swirl-free solution of the Euler equation and is not identically zero.
Suppose further that
$\omega^0_{\theta}(r,z)$ is odd in $z$, compactly supported away from the axis $r=0$ and the plane $z=0$, and for all $r,z \geq 0$,
\begin{equation}
    \omega_\theta(r,z)\leq 0.
\end{equation}
Then the support of the vorticity must develop anisotropies to the extent that 
\begin{equation}
    \int_0^\infty \kappa(t) \diff t
    \leq 
    \frac{1}{2 Q^0},
\end{equation}
where that $Q^0$ is defined as in Theorem \ref{BlowupThm} and
\begin{multline}
    \kappa(t):= \frac{1}{2\pi}
    \inf_{(r,z),(\bar{r},\bar{z})\in \Omega_+(t)}
    \int_0^1 
    \frac{(z+\Bar{z})
    s\left(1-s^2\right)^{-\frac{1}{2}}}
    {r\bar{r}} \\
    \left(\frac{1}
    {\left((z+\Bar{z})^2+r^2+\Bar{r}^2
    -2r\Bar{r}s\right)^\frac{3}{2}}
    -\frac{1}
    {\left((z+\Bar{z})^2+r^2+\Bar{r}^2
    +2r\Bar{r}s\right)^\frac{3}{2}}
    \right)
    \diff s,
\end{multline}
where $\Omega_+(t)$ is the support of $\omega_\theta(\cdot,t)$ in the upper half space.
\end{theorem}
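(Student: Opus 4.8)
The plan is to track the same functional
\begin{equation}
    Q(t)=-\int_0^\infty\int_0^\infty r^2\omega_\theta(r,z,t)\diff r\diff z
\end{equation}
that drives the blowup in Theorem \ref{BlowupThm}, and to convert the growth identity \eqref{GrowthB} for the genuine Euler equation into a differential inequality which, under the standing hypothesis of global regularity, forces $\int_0^\infty\kappa(t)\diff t$ to be finite. First I would establish \eqref{GrowthB}. Writing the axisymmetric, swirl-free vorticity equation in divergence form $\partial_t\omega_\theta+\partial_r(u_r\omega_\theta)+\partial_z(u_z\omega_\theta)=0$ (as recorded in the remark after Theorem \ref{AxisymThm}) and differentiating $Q$, the $z$-integration by parts contributes nothing, since the boundary term at $z=0$ vanishes because oddness forces $\omega_\theta(r,0,t)=0$ and the terms at infinity vanish by compact support, while the $r$-integration by parts converts $r^2\partial_r(u_r\omega_\theta)$ into $2r\,u_r\omega_\theta$. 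This yields $\tfrac{\diff Q}{\diff t}=2\int_0^\infty\int_0^\infty r\,u_r\omega_\theta\diff r\diff z$, exactly twice the inviscid-vortex-stretching value and the source of the factor of two relative to \eqref{GrowthA}. Inserting the odd Biot--Savart representation of $u_r$ from Proposition \ref{BiotSavartOdd} and symmetrizing in $(r,z)\leftrightarrow(\bar r,\bar z)$ precisely as in Proposition \ref{DerivativeProp} then produces \eqref{GrowthB}.

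Next I would record that the geometric structure is preserved by the Euler flow. Since $\tfrac{\omega_\theta}{r}$ is transported, its sign is conserved along trajectories, so $\omega_\theta(\cdot,t)\le 0$ on the upper half plane for all $t$, and the odd-in-$z$ symmetry is preserved by the reflection symmetry of the equation, exactly as in Lemma \ref{OddLemma} for the model. Consequently $\omega_\theta(r,z,t)\,\omega_\theta(\bar r,\bar z,t)\ge 0$ throughout $\Omega_+(t)\times\Omega_+(t)$. Recognizing that the $s$-integral in \eqref{GrowthB}, after the $r\bar r$ weight of the measure is recombined with the $(r\bar r)^{-1}$ appearing in the definition of $\kappa(t)$, is bounded below on $\Omega_+(t)\times\Omega_+(t)$ by $2\kappa(t)\,r^2\bar r^2$, I obtain
\begin{equation}
    \frac{\diff Q}{\diff t}\ge 2\kappa(t)\left(\int_0^\infty\int_0^\infty r^2\omega_\theta(r,z,t)\diff r\diff z\right)^2=2\kappa(t)\,Q(t)^2 .
\end{equation}

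The final step is an elementary Riccati integration. Because $\kappa(t)\ge 0$, the map $Q$ is non-decreasing, so $Q(t)\ge Q^0>0$ for all $t$, and global regularity keeps the compactly supported vorticity transported by a smooth flow, so $Q(t)$ is finite for every $t$. Dividing by $Q^2$ gives $-\tfrac{\diff}{\diff t}\bigl(\tfrac{1}{Q}\bigr)\ge 2\kappa(t)$, and integrating over $[0,T]$ yields $\tfrac{1}{Q^0}-\tfrac{1}{Q(T)}\ge 2\int_0^T\kappa(t)\diff t$. Since $Q(T)>0$, the term $\tfrac{1}{Q(T)}$ is positive and may be discarded, so $2\int_0^T\kappa(t)\diff t\le \tfrac{1}{Q^0}$; letting $T\to\infty$ gives $\int_0^\infty\kappa(t)\diff t\le \tfrac{1}{2Q^0}$, as claimed.

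I expect the main obstacle to be the rigorous justification of the growth identity \eqref{GrowthB} for the true Euler equation rather than the model: verifying that every boundary term in the two integrations by parts vanishes, which requires the support to remain compact and bounded away from the axis $r=0$ together with oddness to annihilate the $z=0$ contribution, and confirming that the radial integration by parts produces exactly the factor of two. A secondary point to pin down is that global regularity indeed keeps $Q(t)$ finite, so that dropping $\tfrac{1}{Q(T)}$ is legitimate; this follows because the smooth Euler flow transports the compactly supported $\omega_\theta$ to a compact set at each time. Everything after the differential inequality is routine.
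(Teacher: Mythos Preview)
Your argument follows the paper's line almost exactly: define $Q(t)$, derive the growth identity \eqref{GrowthB} from the divergence-form vorticity equation via two integrations by parts, use the preserved sign and oddness to bound $\tfrac{\diff Q}{\diff t}\ge 2\kappa(t)Q(t)^2$, and integrate. The one substantive difference is the final step. After reaching $Q(t)\ge Q^0/(1-2Q^0\int_0^t\kappa)$, the paper invokes the Choi--Jeong upper bound $Q(t)\le C(1+t)^2$ to guarantee that $Q(t)$ is finite, and hence that $\int_0^t\kappa<1/(2Q^0)$ for every $t$. Your version is more self-contained: you note that global regularity together with transport of the compact support keeps $Q(t)$ finite and positive, divide the Riccati inequality through by $Q^2$, integrate $-\tfrac{\diff}{\diff t}(1/Q)\ge 2\kappa$ over $[0,T]$, and simply discard the positive term $1/Q(T)$. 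This bypasses the external citation and is a cleaner way to close the argument. One small slip to fix: the intermediate expression should read $\tfrac{\diff Q}{\diff t}=-2\int_0^\infty\int_0^\infty r\,u_r\,\omega_\theta\,\diff r\,\diff z$ (the integration by parts in $r$ contributes a minus sign), matching the paper; since you correctly identify this as twice the inviscid value and land on \eqref{GrowthB}, the rest of your argument is unaffected.
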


\begin{proof}
As in Theorem \ref{BlowupThm}, 
we will let
\begin{equation}
    Q(t)
    =
    -\int_0^\infty\int_0^\infty
    r^2\omega_{\theta}(r,z,t)
    \diff r \diff z.
\end{equation}
From there we can compute that
\begin{equation}
    \frac{\diff Q}{\diff t}
    =
    \int_0^\infty
    \int_0^\infty 
    r^2 \left(
    \partial_r(u_r(r,z,t)\omega_\theta(r,z,t))
    +\partial_z(u_z(r,z,t)\omega_\theta(r,z,t))
    \right) \diff r \diff z.
\end{equation}
Integrating by parts, we find that
\begin{equation}
    \frac{\diff Q}{\diff t}
    =-2\int_0^\infty
    \int_0^\infty 
    r u_r(r,z,t)\omega_{\theta}(r,z,t)
    \diff r \diff z,
\end{equation}
noting that the geometric properties of our initial data are preserved by the dynamics and guarantee that the boundary terms vanish.
Applying Proposition \ref{DerivativeProp}, we can see that for all $0<t<+\infty$,
\begin{multline}
    \frac{\diff Q}{\diff t}
    =
    \frac{1}{\pi} 
    \int_0^\infty \int_0^\infty
    \int_0^\infty \int_0^\infty 
    \int_0^1
    r\bar{r}\omega_\theta(r,z,t)
    \omega_\theta(\bar{r},\bar{z},t)
    \\
    \left(\frac{(z+\Bar{z})
    s\left(1-s^2\right)^{-\frac{1}{2}}}
    {\left((z+\Bar{z})^2+r^2+\Bar{r}^2
    -2r\Bar{r}s\right)^\frac{3}{2}}
    -\frac{(z+\Bar{z})
    s\left(1-s^2\right)^{-\frac{1}{2}}}
    {\left((z+\Bar{z})^2+r^2+\Bar{r}^2
    +2r\Bar{r}s\right)^\frac{3}{2}}
    \right)
    \diff s \diff\bar{r} \diff\bar{z} 
    \diff r \diff z,
\end{multline}
and consequently,
\begin{align}
    \frac{\diff Q}{\diff t}(t)
    &\geq
    2\kappa(t)
    \int_0^\infty \int_0^\infty
    \int_0^\infty \int_0^\infty 
    r^2\bar{r}^2\omega_\theta(r,z,t)
    \omega_\theta(\bar{r},\bar{z},t)
    \diff\bar{r} \diff\bar{z} 
    \diff r \diff z \\
    &=
    2\kappa(t)\left(
    \int_0^\infty \int_0^\infty 
    r^2 \omega_\theta(r,z,t)
    \diff r \diff z \right)^2 \\
    &=
    2\kappa(t) Q(t)^2.
\end{align}

Integrating this differential inequality we find that for all $0<t<+\infty$,
\begin{equation}
    Q(t)\geq \frac{Q^0}{1-2Q^0
    \int_0^t \kappa(\tau)\diff\tau}.
\end{equation}
Choi and Jeong proved \cite{ChoiJeong} an upper bound on the growth of $Q(t)$ like
\begin{equation}
    Q(t)\leq C(1+t)^2,
\end{equation}
so we may conclude that for all $0<t<+\infty$,
\begin{equation}
    \int_0^t \kappa(\tau) \diff\tau <\frac{1}{2Q^0}.
\end{equation}
We know $\kappa(t)>0$ for all $0\leq t<+\infty$,
and so this immediately implies that
\begin{equation}
    \int_0^\infty \kappa(t) \diff t
    \leq \frac{1}{2Q^0},
\end{equation}
and this completes the proof.
\end{proof}

\section*{Acknowledgements}

This work was supported by the Pacific Institute for the Mathematical Sciences, while the author was a PIMS postdoctoral fellow at the University of British Columbia.

\bibliographystyle{plain}
\bibliography{bib}

\end{document}